\documentclass[11pt, english, draft]{article}
\usepackage{amssymb,amsmath}
\setlength{\topmargin}{-23mm} \setlength{\oddsidemargin}{-0.3cm}
\textwidth=165mm \textheight=260mm
\title{\bf Recognizing by Spectrum for the Automorphism Groups of Sporadic Simple
Groups }
\author{ {\bf V. D. Mazurov}\thanks{This work is supported by Russian Science
Foundation (Project no. 14-21-00065).} \  and  \ {\bf A. R.
Moghaddamfar}}

\newenvironment{proof}{\noindent {\em {Proof}}.}{$\square$
\medskip}

\newtheorem{theorem}{Theorem}

\newtheorem{lm}{Lemma}

\begin{document}

\maketitle

\begin{abstract}
\noindent The spectrum of a finite group is the set of its
element orders, and two groups are said to be isospectral if they
have the same spectra. A finite group $G$ is said to be
recognizable by spectrum, if every finite group isospectral with
$G$ is isomorphic to $G$. We prove that if $S$ is any of the
sporadic simple groups $M^cL$, $M_{12}$, $M_{22}$, $He$, $Suz$,
$O'N$, then ${\rm Aut}(S)$ is recognizable by spectrum. This
finishes the proof of the recognizability by spectrum of the
automorphism groups of all sporadic simple groups, except $J_2$.
Furthermore, we show that if $G$ is isospectral with ${\rm
Aut}(J_2)$, then either $G$ is isomorphic to ${\rm Aut}(J_2)$, or
$G$ is an extension of a $2$-group by $\mathbb{A}_8$.
\end{abstract}

\renewcommand{\baselinestretch}{1.1}
\def\thefootnote{ \ }

\footnotetext{{\em $2000$ Mathematics Subject Classification}:
$20D05$.\\
{\em Keywords and phrases}: automorphism groups of sporadic
simple groups, prime graph, recognition by spectrum.}

\section{Introduction}
The {\em spectrum} $\omega(G)$ of a finite group $G$ is the set of
all element orders of $G$. This set is a nonempty subset of the
set of natural numbers and is closed under divisibility. Hence,
$\omega(G)$ is uniquely determined by the subset $\mu(G)$ of its
elements which are maximal under the divisibility relation. For an
arbitrary subset $\omega$ of the set of natural numbers, denote by
$h(\omega)$ the number of isomorphic classes of finite groups $G$
such that $\omega(G)=\omega$. For convenience, we will write
$h(G)$ instead of $h(\omega(G))$. A finite group $G$ is said to be
{\em recognizable by spectrum} (shortly, {\em recognizable}) if
$h(G)=1$, {\em almost recognizable} if $1<h(G)<\infty$, and {\em
nonrecognizable} if $h(G)=\infty$.

The set $\omega(G)$ determines the {\em prime graph} (or {\em
Gruenberg-Kegel} graph) ${\rm GK}(G)$ whose vertex set is
$\pi(G)$, the set of primes dividing the order of $G$, and two
distinct vertices $p$ and $q$ are {\em adjacent} if and only if
$pq\in \omega(G)$. We denote by $s(G)$ the number of connected
components of ${\rm GK}(G)$ and by $\pi_i(G)$, $i= 1, 2, \ldots,
s(G)$, the set of vertices of $i$-th connected component. We
often identify a connected component of a prime graph with its
vertex set, and vice versa. If $2\in \pi(G)$, then we assume that
$2\in \pi_1(G)$. The vertex set of connected components of prime
graphs associated with finite simple groups are listed in \cite{k}
and \cite{w} (see the improved list in \cite{newmaz}).

In (\cite{lishi}, \cite{mazshi},
\cite{praegershi}--\cite{wujieli}), it is proved that the sporadic
simple groups except $J_2$ are recognizable and $J_2$ is
nonrecognizable. In \cite{mzd}, it is showed that if $S$ is a
sporadic simple group except $M_{12}$, $M_{22}$, $J_2$, $He$,
$Suz$, $M^{c}L$ and $O'N$ then ${\rm Aut}(S)$ is recognizable by
spectrum, and if $S$ is isomorphic to $M_{12}$, $M_{22}$, $He$,
$Suz$ or $O'N$, then $h({\rm Aut}(S))\in \{1,\infty\}$.

For convenience, we list in Table 1 the sets $\mu({\rm Aut}(S))$
and the connected components of ${\rm Aut}(S)$ for these sporadic
simple groups.

The goal of this paper is to prove the following
theorems:\\[0.3cm]
{\bf Theorem A.} {\em If $S$ is isomorphic to $M^cL$, $M_{12}$,
$M_{22}$, $He$, $Suz$ or $O'N$, then ${\rm Aut}(S)$ is
recognizable by spectrum.}\\[0.2cm]
{\bf Theorem B.} {\em Let $G$ be a finite group with
$\omega(G)=\omega({\rm Aut}(J_2))$. Then, either $G$ is isomorphic
to ${\rm Aut}(J_2)$, or $G$ is an extension of a $2$-group by
$\mathbb{A}_8$.}\\[0.2cm]
{\em Remark.} In the second case of Theorem B, we have not been
able to find any group $G$ with the given properties.

Therefore, Theorem A furnishes the recognizability question for
the automorphism groups of all sporadic simple groups except
$J_2$. In fact, Theorem A and the results of \cite{mzd} imply the
following.\\[0.3cm]
{\bf Corollary.} {\em  The automorphism group of every sporadic
simple groups, except $J_2$, is recognizable by spectrum.}\\[0.3cm]
\indent We use the following notation. For a finite group $G$,
denote by $\pi(G)$ the set of all prime divisors of $|G|$. In
addition, let ${\rm Soc}(G)$ be the socle of $G$, i.e., the
product of all minimal normal subgroups of $G$. Denote by
$\mathbb{A}_n$ and $\mathbb{S}_n$ the alternating and symmetric
group on $n$ letters,  respectively.
\begin{center}
{\bf Table 1.} {\em The spectrum of automorphism group of some
sporadic simple
groups.}\\[0.2cm]
\begin{tabular}{|l|l|l|l|}
\hline
$S$  & $\mu({\rm Aut}(S))$ & $\pi_1({\rm Aut}(S))$ & $\pi_2({\rm Aut}(S))$\\
\hline
$M_{12}$  & 8, 10, 11, 12 & 2, 3, 5 & 11\\
$M_{22}$ & 8, 10, 11, 12, 14& 2, 3, 5, 7& 11 \\
$J_2$ &  10, 14, 15, 24 & 2, 3, 5, 7 & $-$ \\
$He$ &  16, 17, 20, 24, 28, 30, 42& 2, 3, 5, 7& 17 \\
$M^cL$ & 9, 14, 20, 22, 24, 30   & 2, 3, 5, 7, 11 & $-$ \\
$Suz$  & 13, 16, 18, 21, 22, 24, 28, 30, 40& 2, 3, 5, 7, 11 & 13\\
$O'N$  & 16, 20, 22, 24, 30, 31, 38, 56& 2, 3, 5, 7, 11, 19& 31 \\
\hline
\end{tabular}\\[0.7cm]
\end{center}

\section{Preliminary Results}

\begin{lm}\label{lemma0}
Let $K$ be an Abelian normal subgroup of a group $G$ and
$\varphi: G\rightarrow {\rm Aut}(K)$ be the homomorphism induced
by conjugation in $G$. Suppose that $Kg$ is an element of order
$m$ in $G/K$ and $g^m=1$. All elements in the coset $Kg$ are of
order $m$ if and only if the equality $f(g^\varphi)=0$ holds in
the endomorphism ring of $A$ where $f(x)=1+x+\cdots+x^{m-1}$.
\end{lm}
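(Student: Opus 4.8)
The plan is to reduce the statement to a single computation of the $m$-th power of a general element of the coset $Kg$, exploiting that $K$ is abelian so that its endomorphism ring acts $\mathbb{Z}$-linearly. I write $K$ additively and identify the ``$A$'' of the statement with $K$. Set $x := g^\varphi \in \mathrm{Aut}(K) \subseteq \mathrm{End}(K)$, with the convention $x(k) = gkg^{-1}$. Since $\varphi$ is a homomorphism, $x^i = (g^i)^\varphi$ is conjugation by $g^i$, which yields the commutation identity $g^i k = x^i(k)\,g^i$ for every $k \in K$ and every $i \ge 0$.

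First I would prove by induction on $j$ that
\[
(kg)^j = k\cdot x(k)\cdot x^2(k)\cdots x^{j-1}(k)\cdot g^j \qquad (j \ge 1),
\]
the inductive step pushing the trailing power of $g$ past the new factor $k$ via the commutation identity. Putting $j = m$ and invoking the hypothesis $g^m = 1$ removes the final factor $g^m$, so that $(kg)^m = k\cdot x(k)\cdots x^{m-1}(k) \in K$. Because $K$ is abelian, this ordered product may be regrouped and, in additive notation, collapses to the sum $\bigl(1 + x + \cdots + x^{m-1}\bigr)(k)$; that is,
\[
(kg)^m = f(g^\varphi)(k),
\]
an identity in $K$ valid for every $k \in K$.

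Next I would convert the order condition into a statement about this identity. The image of $kg$ in $G/K$ is $Kg$, which has order $m$, so the order of $kg$ is a multiple of $m$; since $(kg)^m \in K$, the element $kg$ has order exactly $m$ if and only if $(kg)^m = 0$ in $K$, i.e. $f(g^\varphi)(k) = 0$. Hence every element of $Kg$ has order $m$ if and only if $f(g^\varphi)(k) = 0$ for all $k \in K$, which is precisely the assertion that $f(g^\varphi)$ is the zero element of $\mathrm{End}(K)$. Both implications of the lemma then fall out of this equivalence simultaneously.

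I expect the difficulties to be purely a matter of bookkeeping rather than of ideas. The two hypotheses that actually do the work are that $K$ is abelian, so the ordered product regroups and collapses to applying the single ring element $f(x)$, and that $g^m = 1$ holds in $G$ and not merely in $G/K$, which is what lands $(kg)^m$ inside $K$. A minor point worth recording is independence of the conjugation convention: with $x(k) = g^{-1}kg$ the same computation produces $\bigl(1 + x^{-1} + \cdots + x^{-(m-1)}\bigr)(k)$, which equals $f(g^\varphi)(k)$ because $x^m = (g^m)^\varphi = \mathrm{id}$ forces $x^{-i} = x^{m-i}$. Beyond verifying the commutation identity and this regrouping, I anticipate no genuine obstacle.
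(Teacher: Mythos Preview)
Your proposal is correct and follows essentially the same approach as the paper: the paper's entire proof is to cite the identity $(ag)^m = g^m\cdot a^{g^{m-1}}\cdots a^{g}\cdot a$ for $a\in K$ and declare the lemma an obvious consequence, which is exactly the computation you carry out in detail (your inductive identity for $(kg)^j$ is this formula, and your subsequent paragraph is the ``obvious consequence'' spelled out). There is no substantive difference in method.
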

\begin{proof}
This is the obvious consequence of the well-known equality:
$$(ag)^m=g^m\cdot a^{g^m}\cdot a^{g^{m-1}}\cdots a,$$ where $a\in
K$. \end{proof}

\begin{lm}\label{automorphism}
Let $S=P_1\times P_2\times \cdots \times P_t$, where $P_i$'s are
isomorphic non-Abelian simple groups. Then, there hold $${\rm
Aut}(S)\cong \left({\rm Aut}(P_1)\times {\rm Aut}(P_2)\times
\cdots \times {\rm Aut}(P_t)\right)\rtimes \mathbb{S}_t.$$ In
particular, $|{\rm Aut}(S)|=\prod_{i=1}^t|{\rm Aut}(P_i)|\cdot
t!.$
\end{lm}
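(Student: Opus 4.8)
The plan is to exploit the fact that, because the factors $P_i$ are non-Abelian simple, they can be recovered intrinsically from the group structure of $S$, so that every automorphism is forced to permute them. First I would establish the structural lemma that every normal subgroup of $S=P_1\times\cdots\times P_t$ is a direct product of a subset of the factors; in particular the minimal normal subgroups of $S$ are exactly $P_1,\dots,P_t$. This is where the non-Abelian simplicity is genuinely used: for a minimal normal subgroup $N$ and any factor $P_i$ one has $[N,P_i]\le N\cap P_i\trianglelefteq P_i$, so by simplicity $N\cap P_i$ is trivial or all of $P_i$, and since each $P_i$ has trivial centre a short analysis forces $N$ to coincide with one of the $P_i$. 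I expect this step to be the main (though standard) obstacle, as it is essentially the only place the hypotheses on the $P_i$ enter.

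With this in hand, any $\alpha\in\mathrm{Aut}(S)$ maps the set $\{P_1,\dots,P_t\}$ of minimal normal subgroups to itself, so $\alpha(P_i)=P_{\sigma(i)}$ for a unique $\sigma\in\mathbb{S}_t$. The assignment $\alpha\mapsto\sigma$ is a homomorphism $\Phi:\mathrm{Aut}(S)\to\mathbb{S}_t$, and I would identify its kernel $K$ as the subgroup of automorphisms fixing every $P_i$ setwise. Such an $\alpha$ restricts to an automorphism $\alpha_i\in\mathrm{Aut}(P_i)$ on each factor, and since $S$ is generated by the $P_i$ and $\alpha$ is multiplicative, $\alpha$ must equal the componentwise map $\alpha_1\times\cdots\times\alpha_t$; conversely every tuple $(\alpha_1,\dots,\alpha_t)$ gives back such an automorphism. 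Hence $K\cong\mathrm{Aut}(P_1)\times\cdots\times\mathrm{Aut}(P_t)$.

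Finally I would show $\Phi$ is surjective and that the extension splits. Fixing isomorphisms $\theta_i:P_1\to P_i$ with $\theta_1=\mathrm{id}$, each $\sigma\in\mathbb{S}_t$ determines an automorphism of $S$ carrying the $i$-th coordinate to the $\sigma(i)$-th via the composite $\theta_{\sigma(i)}\theta_i^{-1}$; this defines a section $\mathbb{S}_t\to\mathrm{Aut}(S)$, which simultaneously proves surjectivity of $\Phi$ and exhibits $\mathbb{S}_t$ as a complement to $K$. Combining the three steps gives the split short exact sequence $1\to K\to\mathrm{Aut}(S)\xrightarrow{\Phi}\mathbb{S}_t\to1$, whence $\mathrm{Aut}(S)\cong K\rtimes\mathbb{S}_t=(\mathrm{Aut}(P_1)\times\cdots\times\mathrm{Aut}(P_t))\rtimes\mathbb{S}_t$, with $\mathbb{S}_t$ acting by permuting the direct factors. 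The order formula $|\mathrm{Aut}(S)|=\prod_{i=1}^t|\mathrm{Aut}(P_i)|\cdot t!$ then follows at once from $|K|=\prod_i|\mathrm{Aut}(P_i)|$ and $|\mathbb{S}_t|=t!$.
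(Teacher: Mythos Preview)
Your argument is correct and is the standard proof of this well-known structural result: identifying the $P_i$ as the minimal normal subgroups of $S$, obtaining the permutation homomorphism $\Phi$ to $\mathbb{S}_t$, computing its kernel as the direct product of the $\mathrm{Aut}(P_i)$, and splitting via the section built from fixed isomorphisms $\theta_i$. The paper itself does not give a proof at all but simply cites Lemma~2.2 of Zavarnitsine's paper, so your write-up in fact supplies more than the paper does; there is nothing to compare beyond noting that what you have outlined is precisely the argument one would expect behind that citation.
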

\begin{proof} See Lemma 2.2 in \cite{zavarnitsine}.
\end{proof}

\begin{lm}\label{vicmazurov} Let $G$ be a finite group, and let
$N\lhd G$ and $G/N$ be a Frobenius group with kernel $F$ and
cyclic complement $C$. If $(|F|,|N|)=1$ and $F$ does not lie in
$NC_G(N)/N$, then $p|C|\in \omega(G)$ for some prime divisor $p$
of $|N|$. Furthermore, if the preimage of $F$ in $G$ is a
Frobenius group, then $$|C|\cdot\prod_{p\in \pi(N)}p\in
\omega(G)$$
\end{lm}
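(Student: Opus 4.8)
The plan is to produce the required element orders by lifting through the normal chain $1\lhd N\lhd H\lhd G$, where $H$ denotes the full preimage of $F$ in $G$, and then to read off the order of a suitable coset element from the formula in Lemma~\ref{lemma0}. First I would record the basic structure. Since $F=H/N$ is the Frobenius kernel of $G/N$ it is normal there, so $H\lhd G$ and $G/H\cong C$ is cyclic of order $c:=|C|$. Because $(|F|,|N|)=1$, the Schur--Zassenhaus theorem splits $H=N\rtimes F_1$ with $F_1\cong F$ acting coprimely on $N$, and the hypothesis $F\not\le NC_G(N)/N$ says exactly that $[N,F_1]\ne1$. A standard coprime-action fact (an automorphism group of coprime order that centralizes every chief factor centralizes the whole group) then yields a $G$-chief factor $V=V_1/V_2$ inside $N$ on which $F_1$ acts nontrivially; I fix a prime $p\mid|V|$ and note $p\nmid|F|$. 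After a minimal-counterexample reduction (replacing $G$ by a quotient on which $F_1$ still acts nontrivially), I may assume $V$ is an elementary abelian $p$-group which is a faithful irreducible module for the Frobenius group $\overline{F}\,\overline{C}$ induced on it, with $\overline{F}\ne1$ and $\overline{C}$ the image of $C$.

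The heart of the matter is to show that, for a lift $g\in G$ of a generator of $C$, the operator $f(\bar g)=1+\bar g+\cdots+\bar g^{\,c-1}$ of Lemma~\ref{lemma0} does not vanish on $V$; equivalently, that $C$ fixes a nonzero vector of $V$. This is where the Frobenius configuration is essential. Writing $W:=[V,\overline{F}]\ne0$, so that $C_W(\overline{F})=0$ by coprime decomposition, I would run a Brauer-character count over the Frobenius group $\overline{F}\,\overline{C}$: summing $\chi$ over $\overline{F}$ gives $\sum_{f\ne1}\chi(f)=-\dim W$, and summing over all of $\overline{F}\,\overline{C}$ collapses the $\overline{F}$-contribution, leaving $\dim C_W(\overline{C})\ge \dim W/|\overline{C}|>0$. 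Hence $C_V(\bar g)\ne0$ and $f(\bar g)\ne0$ on $V$.

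Granting this, Lemma~\ref{lemma0} supplies $v\in V_1$ whose image satisfies $\bar v^{\,f(\bar g)}\ne0$, so $(vg)^{c}\equiv \bar v^{\,f(\bar g)}\pmod{V_2}$ is a nontrivial element of the elementary abelian $p$-group $V$. Now $vg$ projects to a generator of $C$ in $G/N$, so $o(vg)=cm$ for some $m$, and $o\big((vg)^{c}\big)=m$ is divisible by $p$ because its image in $V$ has order $p$. Therefore $cp\mid o(vg)$, and since $\omega(G)$ is closed under divisibility, $p|C|=cp\in\omega(G)$, proving the first assertion. For the \emph{Furthermore} statement, the extra hypothesis forces $N$ to be the Frobenius kernel of $H$, hence nilpotent, so $N=\prod_{p\in\pi(N)}N_p$ with $F_1$ acting fixed-point-freely on each Sylow factor. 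Running the argument above for each $p$ relative to one fixed lift $g$, I would locate a nontrivial $p$-element $w_p=(v_pg)^{c}$ lying in $C_N(g)$; these lie in distinct Sylow subgroups of the nilpotent group $N$, so they commute with one another and with $g$. Their product $w=\prod_p w_p$ has order $\prod_{p\in\pi(N)}p$ and is centralized by $g$, whence the order of $wg$ is divisible by $|C|\cdot\prod_{p\in\pi(N)}p$, giving the claim.

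The step I expect to be the main obstacle is the reduction to the clean module situation together with the fixed-point count, that is, guaranteeing that the \emph{full} Frobenius group $FC$ (with complement of the original order $|C|$, not a smaller quotient) genuinely acts on the chosen chief factor $V$ while $N$ acts trivially on it, and handling the character computation when $p\mid|C|$, where elements of $C$ are no longer $p$-regular and the ordinary-character argument must be replaced by its Brauer-character analogue or circumvented by working directly with the order formula of Lemma~\ref{lemma0} in $G$.
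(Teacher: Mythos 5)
The paper offers no proof of this lemma --- it simply cites Lemma 1 of \cite{maz} --- so your attempt has to be judged against the argument behind that citation, and your skeleton is indeed that argument: Schur--Zassenhaus splitting of the preimage $H=N\rtimes F_1$, passage to a $G$-chief factor $V\leq N$ with $[V,F_1]\neq 0$, and Lemma \ref{lemma0} to convert nonvanishing of $f(\bar g)$ into an element of order $p|C|$. (The worry you raise about the complement shrinking is easily dispatched: the kernel $K$ of the action of $FC$ on $V$ is normal in $FC$ and does not contain $F$; any $x\in K\setminus F$ would act coprimely and fixed-point-freely on $F$, giving $F=[F,x]\leq K$, a contradiction, so $K\leq F$ and $FC/K$ is again Frobenius with complement of order exactly $|C|$.) The genuine gap is precisely the case you flag and then leave open: $p$ dividing $|C|$. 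There your fixed-point count is unavailable (the formula $\dim C_W(X)=|X|^{-1}\sum_{x\in X}\chi(x)$ requires $X$ to be a $p'$-group, and Brauer characters are not even defined on $p$-singular elements), and --- worse --- the implication you rely on, that $C_V(\bar g)\neq 0$ yields $f(\bar g)\neq 0$, is simply false in that case: on a fixed vector $v$ one has $f(\bar g)v=cv=0$ because $p\mid c$. What the proof actually needs is the stronger statement that $V$ is a \emph{free} $k\langle\bar g\rangle$-module whenever $C_V(\bar F)=0$ and $p\nmid|\bar F|$; this is proved structurally, not by counting: $V|_{\bar F}$ is semisimple with zero trivial component, a nontrivial element of the cyclic group $\bar C$ fixes no nontrivial class of $\bar F$ (Frobenius property), hence by Brauer's permutation lemma no nontrivial irreducible character of $\bar F$, so $\bar C$ permutes the nonzero isotypic components of $V|_{\bar F}$ in regular orbits, $V|_{\bar C}$ is free, and on a free module $f(\bar g)\neq 0$ whether or not $p\mid c$. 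You cannot dismiss this case as peripheral: the paper invokes the lemma exactly there, e.g.\ for the Frobenius subgroup $8{:}7$ with $p=7$ in Lemma \ref{l4mac} (to exclude $49$) and for $4{:}3$ with $p=3$ in Lemmas \ref{l3janko} and \ref{lemmaeight} (to exclude $9$).

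There is a second, smaller defect in your ``furthermore'' argument: the claim $w_p=(v_pg)^{c}\in C_N(g)$ is unjustified. The identity $f(\bar g)\bar g=f(\bar g)$ only shows that the image of $w_p$ in the chief factor $V_1/V_2$ is fixed by $\bar g$; it does not make $w_p$ commute with $g$ in $G$, so your elements $w_p$ need not commute with $g$ or with one another, and the order of $\bigl(\prod_p w_p\bigr)g$ is uncontrolled. The correct combination uses a single element rather than a product of separately constructed ones: $N$ is nilpotent (it is a Frobenius kernel), so $N=\prod_p N_p$ with each $N_p$ normal in $G$ and $F_1$ fixed-point-free on each; fixing one lift $g$, choose $x_p\in N_p$ so that the image of $(x_pg)^{c}$ in a $G$-chief factor inside $N_p$ is nontrivial (possible by the free-module statement above, since the relevant map is affine with nonzero linear part $f(\bar g)$), and set $x=\prod_p x_p$. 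Then $(xg)^{c}$ has commuting, nontrivial components in the distinct Sylow subgroups of $N$, so its order is divisible by $\prod_{p\in\pi(N)}p$, and the order of $xg$ is divisible by $|C|\cdot\prod_{p\in\pi(N)}p$.
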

\begin{proof} See Lemma 1 in \cite{maz}.
\end{proof}
\section{Recognition of ${\rm Aut}(M^cL)$ }

Let $p$ be a prime. Denote by $\mathcal{S}_{p}$ the set of all
finite non-Abelian simple groups whose prime divisors are at most
$p$. In this section we deal with the list of finite non-Abelian
simple groups in $\mathcal{S}_{11}$, determined in \cite{maz} (see
Table 2 below). In particular, the information from this table
implies:

\begin{lm}\label{elementary}
If $S\in \mathcal{S}_{11}$, then $\{2,3\}\subseteq \pi(S)$ and
$\pi({\rm Out}(S))\subseteq \{2,3\}$.
\end{lm}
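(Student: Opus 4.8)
The plan is to read both assertions off the explicit list of groups in $\mathcal{S}_{11}$ recorded in Table~2, guided by two standard facts so that the case analysis stays short and transparent. I would treat the inclusion $\{2,3\}\subseteq\pi(S)$ first, by a uniform argument, and then verify $\pi({\rm Out}(S))\subseteq\{2,3\}$ by running through the finitely many entries of the list.

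For the inclusion, recall that by the Feit--Thompson theorem every non-Abelian finite simple group has even order, so $2\in\pi(S)$ independently of the list. To obtain $3\in\pi(S)$, I would use the fact that the only non-Abelian simple groups of order prime to $3$ are the Suzuki groups $Sz(2^{2n+1})$, $n\ge1$; since each such group has a prime divisor exceeding $11$ (already $|Sz(8)|$ is divisible by $13$), no Suzuki group belongs to $\mathcal{S}_{11}$. Hence $3\mid|S|$ for every $S\in\mathcal{S}_{11}$, and $\{2,3\}\subseteq\pi(S)$. Equivalently, this inclusion is visible directly from the orders tabulated in Table~2.

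For the outer automorphism assertion I would proceed by inspection of Table~2. The constraint $\pi(S)\subseteq\{2,3,5,7,11\}$ forces every group of Lie type on the list to be defined over a very small field, and in each case ${\rm Out}(S)$ is the product of its diagonal, field, and graph(-field) parts, whose orders are determined by the (small) field size $q=p^{f}$ and the root datum. A direct check then shows each such order is of the form $2^{a}3^{b}$: the diagonal part has order $\gcd(n,q\mp1)$, the field part is cyclic of order $f$, and for every admissible $q$ both of these are $\{2,3\}$-numbers, since any larger field degree would force a prime divisor exceeding $11$ into $|S|$. For the remaining entries one uses the tabulated values: the alternating groups have outer automorphism group of order $2$, except $A_{6}$ whose outer automorphism group has order $4$, and each sporadic group on the list has $|{\rm Out}(S)|\le2$.

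The main obstacle is purely bookkeeping: one must confirm that no entry of Table~2 contributes a field or diagonal automorphism of order divisible by $5$, $7$, or $11$. This is guaranteed by the bound on the defining field coming from $\pi(S)\subseteq\{2,3,5,7,11\}$, and the few borderline cases are settled by computing $|{\rm Out}(S)|$ directly from the orders listed in Table~2. No conceptual difficulty arises beyond verifying the finite list.
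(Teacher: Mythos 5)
Your proposal is correct and takes essentially the same approach as the paper: the paper gives no argument beyond the phrase ``the information from this table implies,'' i.e., both assertions are read off Table 2 (the determination of $\mathcal{S}_{11}$ taken from the cited work of Mazurov) by inspection of the listed orders and outer automorphism groups, exactly as you do. Your supplementary justifications --- Feit--Thompson plus the fact that the simple groups of order coprime to $3$ are the Suzuki groups $Sz(2^{2n+1})$ (all of which have a prime divisor exceeding $11$), and the diagonal/field/graph decomposition of ${\rm Out}(S)$ for the Lie-type entries --- are sound but unnecessary once the explicit finite list is in hand.
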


\begin{center}
{\bf Table 2}. {\em Finite non-Abelian simple groups $S\in \mathcal{S}_{11}$}. \\[0.3cm]
\begin{tabular}{|l|l|l||l|l|l|}
\hline $S$ & Order of $S$ & ${\rm Out}(S)$&$S$ & Order of $S$ & ${\rm Out}(S)$\\[0.1cm]
\hline $\mathbb{A}_5$ & $2^2\cdot 3\cdot 5$ &  2 & $\mathbb{A}_9$ & $2^6\cdot 3^4\cdot 5\cdot 7$ &  2\\[0.1cm]
$L_2(7)$& $2^3\cdot 3\cdot 7$ &  2 & $M_{22}$ & $2^7\cdot 3^2\cdot 5\cdot 7\cdot 11$ & 2\\[0.1cm]
$\mathbb{A}_6\cong L_2(9)$ & $2^3\cdot 3^2\cdot 5$ & $2^2$ & $J_2$ & $2^7\cdot 3^3\cdot 5^2\cdot 7$ & 2\\[0.1cm]
$L_2(8)$ & $2^3\cdot 3^2\cdot 7$ & 3 & $S_6(2)$ & $2^9\cdot 3^4\cdot 5\cdot 7$ & 1\\[0.1cm]
$L_2(11)$ & $2^2\cdot 3\cdot 5\cdot 11$ & 3 & $\mathbb{A}_{10}$ & $2^7\cdot 3^4\cdot 5^2\cdot 7$ & 2\\[0.1cm]
$\mathbb{A}_7$ & $2^3\cdot 3^2\cdot 5\cdot 7$ &  2 & $U_4(3)$ & $2^7\cdot 3^6\cdot 5\cdot 7$ & $D_8$\\[0.1cm]
$U_3(3)$ & $2^5\cdot 3^3\cdot 7$ & 2 & $U_5(2)$ & $2^{10}\cdot 3^5\cdot 5\cdot 11$ & 2\\[0.1cm]
$M_{11}$ & $2^4\cdot 3^2\cdot 5\cdot 11$ & 1 & $\mathbb{A}_{11}$ & $2^7\cdot 3^4\cdot 5^2\cdot 7\cdot 11$ & 2 \\[0.1cm]
$\mathbb{A}_8$& $2^6\cdot 3^2\cdot 5\cdot 7$ &  2 & ${\rm HS}$ & $2^9\cdot 3^2\cdot 5^3\cdot 7^2\cdot 11$ & 2\\[0.1cm]
$L_3(4)$ & $2^6\cdot 3^2\cdot 5\cdot 7$& $D_{12}$ & $S_4(7)$ & $2^8\cdot 3^2\cdot 5^2\cdot 7^4$& 2\\[0.1cm]
$U_4(2)$& $2^6\cdot 3^4\cdot 5$ & 2 & $O_8^+(2)$ & $2^{12}\cdot 3^5\cdot 5^2\cdot 7$ & $S_3$\\[0.1cm]
$L_2(49)$ & $2^4\cdot 3\cdot 5^2\cdot 7^2$ & $2^2$ & $\mathbb{A}_{12}$ & $2^9\cdot 3^5\cdot 5^2\cdot 7\cdot 11$ & 2\\[0.1cm]
$M_{12}$ & $2^6\cdot 3^3\cdot 5\cdot 11$ & 2 & $M^cL$ & $2^7\cdot 3^6\cdot 5^3\cdot 7\cdot 11$ & 2\\[0.1cm]
$U_3(5)$ & $2^4\cdot 3^2\cdot 5^3\cdot 7$ & $S_3$ & $U_6(2)$ & $2^{15}\cdot 3^6\cdot 5^2\cdot 7\cdot 11$ & $S_3$\\[0.1cm]
 \hline
\end{tabular}
\end{center}

\begin{theorem}\label{th1}
The automorphism group of $M^cL$ is recognizable by spectrum.
\end{theorem}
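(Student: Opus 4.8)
The plan is to start from a finite group $G$ with $\omega(G)=\omega(\mathrm{Aut}(M^cL))$, so that $\pi(G)=\{2,3,5,7,11\}$ and $\mu(G)=\{9,14,20,22,24,30\}$; since $\omega$ is closed under divisibility, the mixed products $21,33,35,55,77$ (and also $16,18,25,45,\dots$) are \emph{not} element orders. Because the primes dividing any composition factor of $G$ lie in $\{2,3,5,7,11\}$, every nonabelian composition factor belongs to $\mathcal{S}_{11}$, i.e.\ appears in Table 2. The first step is to show $G$ is nonsolvable: the set $\{5,7,11\}$ is a coclique of $\mathrm{GK}(G)$, and if $G$ were solvable it would contain (by P.\ Hall) a Hall $\{5,7,11\}$-subgroup in which, by these non-adjacencies, every element has prime-power order; but a solvable group all of whose elements have prime-power order involves at most two primes (Higman), a contradiction.

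Second, I would pass to $\bar G=G/K$, where $K$ is the solvable radical. Then $\mathrm{Soc}(\bar G)=S_1\times\cdots\times S_k$ is a direct product of nonabelian simple groups from Table 2 with $C_{\bar G}(\mathrm{Soc}(\bar G))=1$, and $\omega(\bar G)\subseteq\omega(G)$. By Lemma \ref{elementary} each $S_i$ has order divisible by both $2$ and $3$, while $\pi(\mathrm{Out}(S_i))\subseteq\{2,3\}$, so (using Lemma \ref{automorphism} to control the outer part) the primes $5,7,11$ cannot be supplied by $\bar G/\mathrm{Soc}(\bar G)$ unless $k$ is large. Now an element of order $11$ in one factor together with an element of order $3$ in another would give an element of order $33\notin\omega(G)$, and likewise $21,35$ are forbidden; hence as soon as $7$ or $11$ divides $|\mathrm{Soc}(\bar G)|$ there is a single factor, and we are in the almost simple case $S\le\bar G\le\mathrm{Aut}(S)$. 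Comparing $\omega(\mathrm{Aut}(S))$ with the admissible orders (deferring to the radical analysis those $S$, such as $L_2(11)$, whose prime set omits $7$) should single out $S=M^cL$.

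Third — and this is where the real work lies — I would eliminate a nontrivial solvable radical and the remaining multi-factor socles at once. If $K\ne 1$, pick a minimal normal subgroup $V\le K$, an elementary abelian $p$-group, and study the coprime action of elements of $\bar G$ on $V$ through Lemma \ref{lemma0}: an element $x$ of order prime to $p$ that does not act fixed-point-freely produces an element of order $p\cdot|x|$, and for $p\in\{7,11\}$ this lands again and again in the forbidden set ($77,55,33,35,21$). Forcing all odd-order elements of the acting simple section to be fixed-point-free makes a nonabelian simple group (or a non-cyclic, non-quaternion subgroup such as $\mathbb{A}_5$) into a Frobenius complement, which is impossible; Lemma \ref{vicmazurov} supplies the matching contradiction in the Frobenius and $2$-Frobenius configurations. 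The same mechanism disposes of residual socles like $\mathbb{A}_5\times\mathbb{A}_5$, for there $7$ and $11$ must be hidden in $K$ and the action argument applies verbatim, leaving $K=1$ and $S=M^cL$.

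Finally, with $K=1$ one has $M^cL\le G\le\mathrm{Aut}(M^cL)$, and since $\omega(M^cL)$ lacks the orders contributed by the graph automorphism (for instance $22$) while $\omega(G)=\omega(\mathrm{Aut}(M^cL))$, the index $|G:M^cL|$ must equal $2$, giving $G\cong\mathrm{Aut}(M^cL)$. I expect the coprime-action step — killing the solvable radical and the spurious socles simultaneously — to be the main obstacle, precisely because the prime graph is connected ($2$ is joined to every other vertex, so $t(2,G)=1$) and the standard disconnected-graph structure theorems are unavailable; the whole argument must instead be powered by the forbidden mixed orders via Lemmas \ref{lemma0} and \ref{vicmazurov}.
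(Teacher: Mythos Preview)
Your overall architecture matches the paper's: pass to $\bar G=G/K$ over the solvable radical, show $\bar G$ is almost simple with socle in $\mathcal{S}_{11}$, pin down $S\cong M^cL$, kill $K$, and finish via $20$ (or your $22$) $\notin\omega(M^cL)$. Two points where your plan diverges from what actually carries the argument:

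First, the paper inserts an intermediate constraint you skip: at most one prime of $\{5,7,11\}$ can divide $|K|$. This is proved by a short Frattini/Thompson trick --- if $p,q\in\{5,7,11\}$ both divide $|K|$ and $r$ is the third, a Hall $\{p,q\}$-subgroup $T$ of $K$ is normalized (Frattini) by an element of order $r$ acting fixed-point-freely (since $pr,qr\notin\omega(G)$), so $T$ is nilpotent and $pq\in\omega(G)$. This constraint then forces at least one of $7,11$ into $\pi(\bar G)$ and organizes the entire identification of $S$; without it you are pushed into your vaguer ``defer to step~3'' scheme.

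Second, your step~3 has a genuine gap. You argue that forcing the odd-order elements of the simple section to act fixed-point-freely on $V$ would make a subgroup such as $\mathbb{A}_5$ into a Frobenius complement. But $\mathbb{A}_5$ contains involutions, and since $2$ is adjacent to every vertex in $\mathrm{GK}(G)$, those involutions are allowed to fix points on $V$; nothing forces $\mathbb{A}_5$ (or any subgroup of $S$ containing involutions) to act freely. The paper does not argue this way. Once $S=M^cL$ is known, it applies Lemma~\ref{vicmazurov} to two specific Frobenius subgroups of $M^cL$: a subgroup $2^3{:}7$ rules out $p\in\{3,5,7,11\}$ (yielding $7p\in\omega(G)$), and a subgroup $3^2{:}8$ sitting in $3^6{:}M_{10}$ rules out $p=2$ (yielding $16\in\omega(G)$). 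The same device, using Frobenius subgroups of each candidate $S$, also disposes of the intermediate cases in the identification step --- for instance $S\cong L_2(11)$ forces $7\mid|K|$, and then a Frobenius $11{:}5\le S$ produces $35\in\omega(G)$. So your instinct to rely on Lemma~\ref{vicmazurov} is correct, but it is the \emph{supply of suitable Frobenius subgroups in each candidate $S$}, not a global Frobenius-complement impossibility, that does the work.
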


{\em Proof.}  Let $A={\rm Aut}(M^cL)$ and let $G$ be a finite
group such that $$\mu(G)=\mu(A)=\{3^2, \ 2\cdot 7, \ 2^2\cdot 5, \
2 \cdot 11, \ 2^3\cdot 3, \ 2\cdot 3\cdot 5\}.$$ It is obvious
that $s(G)=1$. The proof of Theorem 1 will be done through a
sequence of lemmas.
\begin{lm}\label{l1mac}
Let $N$ be the maximal normal soluble subgroup of $G$. Then only
one of the three primes $5$, $7$, $11$ can divide the order of
$N$.
\end{lm}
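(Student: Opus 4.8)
The plan is to read off the relevant non-adjacencies from $\mu(A)=\{9,14,20,22,24,30\}$ and then argue by contradiction that no two of $5,7,11$ can simultaneously divide $|N|$. Inspecting the maximal orders, the only edge of ${\rm GK}(G)$ among odd primes is $3$–$5$ (coming from $30$): the primes $5,7,11$ are pairwise non-adjacent (no element of order $35$, $55$ or $77$), and both $7$ and $11$ are also non-adjacent to $3$ (no element of order $21$ or $33$); apart from the edge $3$–$5$, each of $5,7,11$ is joined only to $2$ (orders $10,14,22$). Hence, for $p,q\in\{5,7,11\}$ with $p\neq q$, exhibiting a \emph{single} element of order $pq$ in $G$ — or, when $q\in\{7,11\}$, an element of order $3q$ — already gives a contradiction. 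So I assume $p,q\in\{5,7,11\}$ both divide $|N|$ and aim to force one of these forbidden orders.

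First I would pass to the Fitting subgroup $F=F(N)=\prod_r O_r(N)$, which is nilpotent, so its Sylow subgroups commute elementwise. If both $O_p(N)\neq1$ and $O_q(N)\neq1$, then elements of orders $p$ and $q$ chosen from the commuting factors $O_p(N)$ and $O_q(N)$ multiply to an element of order $pq$ — a contradiction. Thus at most one of $p,q$ lies in $\pi(F(N))$.

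Next I would exploit coprime action together with Lemma~\ref{lemma0}. Using $C_N(F(N))\le F(N)$, any prime $q\mid|N|$ with $O_q(N)=1$ forces a Sylow $q$-subgroup $Q$ to act nontrivially on some $O_r(N)$. Applying Lemma~\ref{lemma0} with $K=O_r(N)$ and a generator $g$ of a cyclic $q$-section, the coset $Kg$ contains an element of order $qr$ precisely when $f(g^\varphi)=1+g^\varphi+\cdots+(g^\varphi)^{q-1}\neq0$, i.e.\ when $C_{O_r(N)}(g)\neq1$. For $q\in\{7,11\}$ every odd $r$ is non-adjacent to $q$, and for $q=5$ the only admissible odd $r$ is $3$; so a nontrivial fixed point on any odd $O_r(N)$ with $qr$ a non-edge is already forbidden. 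Consequently $q$ must act fixed-point-freely on each such odd layer — in the typical configuration $O_p(N)\neq1$, $O_q(N)=1$ this makes $O_p(N)\rtimes Q$ a Frobenius group. The case $O_p(N)=O_q(N)=1$ is handled by running the identical fixed-point analysis on the action of the Sylow subgroups on $F(N)$ and descending through its odd layers.

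The hard part will be closing this fixed-point-free (Frobenius) configuration, since it produces no element of order $pq$ directly. Here, as $q$ is odd, the complement $Q$ is cyclic, and I would enlarge the Frobenius action to a section of $G$ on which Lemma~\ref{vicmazurov} bites: taking $\tilde N=O_p(N)\trianglelefteq G$ and locating in $G/\tilde N$ a Frobenius quotient whose cyclic complement is a $q$-group not centralizing $\tilde N$, the lemma yields $p\cdot|C|\in\omega(G)$, hence an element of order divisible by $pq$. Exhibiting the correct normal section is precisely the delicate step, and I expect it to lean on the near-isolation of $5,7,11$ (their non-adjacency to all odd primes except the single edge $3$–$5$) together with the maximality of $N$ as the solvable radical, so that $G/N$ has semisimple socle and, by Lemma~\ref{elementary}, the primes available to build the complement are tightly controlled.
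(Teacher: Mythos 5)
There is a genuine gap, and you have flagged it yourself: the ``hard part'' of closing the fixed-point-free configuration is exactly the content of the lemma, and your sketch for it does not work. Your initial reductions are fine (the graph-reading is correct; commuting elements in the nilpotent group $F(N)$ do force at most one of $p,q$ into $\pi(F(N))$; and a fixed point of a coprime action gives an element of order $qr$ directly, without even needing Lemma~\ref{lemma0}, which in any case requires $K$ Abelian). But the resulting configuration --- say $O_p(N)\neq 1$ with a Sylow $q$-subgroup of $N$ acting fixed-point-freely on it --- cannot be refuted from the internal structure of $N$ at all: soluble groups such as the Frobenius group $5^6{:}7$ realize it, so any completion must use the embedding of $N$ in $G$. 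Your proposed tool, Lemma~\ref{vicmazurov}, is not the right one: it applies when $G/\tilde N$ is a Frobenius \emph{quotient} with cyclic complement, whereas your Frobenius group sits as a \emph{subgroup inside} the soluble radical $N$; there is no reason it should reappear as a quotient of $G$ by a normal subgroup, and you give no construction of such a section.

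The missing idea, which is how the paper closes this case, is to bring in the \emph{third} prime $r\in\{5,7,11\}\setminus\{p,q\}$. If exactly $p$ and $q$ divide $|N|$, take a Hall $\{p,q\}$-subgroup $T$ of the soluble group $N$; by the Frattini argument $G=NN_G(T)$, so $N_G(T)$ contains an element $x$ of order $r$. Since $r$ is non-adjacent to both $p$ and $q$ in ${\rm GK}(G)$, $x$ acts fixed-point-freely on $T$, and Thompson's theorem on fixed-point-free automorphisms of prime order forces $T$ to be \emph{nilpotent}; then $T$ contains commuting elements of orders $p$ and $q$, so $pq\in\omega(G)$, a contradiction. (Note the inversion relative to your plan: fixed-point-freeness is not the obstacle to be overcome but the lever, via Thompson, that produces the forbidden order $pq$.) The case where all three of $5,7,11$ divide $|N|$ is handled in the paper by citing Lemma~8 of \cite{lm}: a soluble group cannot have three pairwise non-adjacent primes in its prime graph. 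Without these two ingredients --- the Frattini/Thompson argument using the third prime, and the solubility obstruction for three isolated primes --- your outline does not reach the conclusion.
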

\begin{proof}
Assume first that $\{5, 7, 11\}\subseteq \pi(N)$. Since every two
distinct primes in $\{5,7,11\}$ are nonadjacent in ${\rm GK}(G)$,
the same is true for ${\rm GK}(N)$. Now, by Lemma 8 in \cite{lm},
$N$ is insoluble, which is a contradiction.

Let $p, q$ and $r$ be distinct primes in $\{5,7,11\}$ given in
arbitrary order. Assume that two of them, for definiteness $p$ and
$q$, divide $|N|$, whereas $r$ does not. Consider a Hall
$\{p,q\}$-subgroup $T$ in $N$. By Frattini argument, $G=NN_G(T)$.
Therefore, the normalizer $N_G(T)$ contains an element of order
$r$, which acts fixed-point-freely on $T$. Thompson Theorem
implies that $T$ is nilpotent. Hence $p\cdot q\in
\omega(T)\subseteq \omega(G)$, which is a contradiction.
\end{proof}
\begin{lm}\label{l2mac} There exists a finite simple group $S\in \mathcal{S}_{11}$
such that $S\leqslant G/N\leqslant {\rm Aut}(S)$.
\end{lm}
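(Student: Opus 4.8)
The plan is to use the structure of $G$ established so far: we have $N$, the maximal normal soluble subgroup, and by Lemma~\ref{l1mac} at most one of the primes $5,7,11$ divides $|N|$. I want to show that $G/N$ contains a non-Abelian simple group $S$ sandwiched as $S\leqslant G/N\leqslant{\rm Aut}(S)$, with $S\in\mathcal{S}_{11}$. The natural tool is the Gruenberg--Kegel structure theorem for groups with disconnected prime graph together with the general socle analysis for $\overline{G}=G/N$. The first step will be to consider $\overline{G}=G/N$ and its socle. Since $N$ is the maximal normal soluble subgroup, $\overline{G}$ has trivial soluble radical, so ${\rm Soc}(\overline{G})$ is a direct product of non-Abelian simple groups, say ${\rm Soc}(\overline{G})=P_1\times\cdots\times P_t$ with each $P_i$ simple, and $\overline{G}$ embeds into ${\rm Aut}({\rm Soc}(\overline{G}))$.

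Next I would pin down that $t=1$, i.e.\ the socle is a single simple group $S$, and that $S\in\mathcal{S}_{11}$. The key constraints come from the spectrum: $\pi(G)=\{2,3,5,7,11\}$, so $\pi(S)\subseteq\{2,3,5,7,11\}$, forcing $S\in\mathcal{S}_{11}$ once simplicity is known; thus the candidate list is exactly Table~2. To rule out $t\geqslant 2$, I would argue on element orders: by Lemma~\ref{elementary} every $S\in\mathcal{S}_{11}$ has $\{2,3\}\subseteq\pi(S)$, so two or more factors would force a product of commuting elements of coprime order built from two copies of the $\{2,3\}$-part. More precisely, if $t\geqslant 2$ then ${\rm Soc}(\overline{G})$ contains a direct product $P_1\times P_2$, and choosing an element of order $3$ in $P_1$ and order $2$ (or a suitable prime power) in $P_2$ one produces element orders in $\overline{G}$, hence in $G$, that violate the prescribed $\mu(A)$; one checks the resulting forbidden products against $\mu(A)=\{9,14,20,22,24,30\}$. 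I expect Lemma~\ref{l1mac} and the adjacency pattern in ${\rm GK}(G)$ (in particular which of $5,7,11$ are mutually nonadjacent) to be what forces $t=1$.

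With $t=1$ we have $S\leqslant\overline{G}\leqslant{\rm Aut}(S)$ for a single simple $S\in\mathcal{S}_{11}$, which is exactly the assertion. The one subtlety to dispatch is ensuring that the three ``isolated'' primes behave correctly: since $11$ lies in its own connected component of the target graph and by Lemma~\ref{l1mac} $11$ need not divide $|N|$, I must confirm $11\in\pi(\overline{G})$ and hence $11\in\pi(S)$, which together with $\pi(S)\subseteq\{2,3,5,7,11\}$ cuts the list down considerably. The main obstacle I anticipate is the bookkeeping that excludes $t\geqslant 2$ cleanly: one has to rule out every way of distributing the primes $2,3,5,7,11$ among several simple factors while keeping all element orders inside $\omega(A)$, and this is where the nonadjacency of $5,7,11$ in ${\rm GK}(G)$ combined with the ubiquity of $\{2,3\}$ in all of $\mathcal{S}_{11}$ (Lemma~\ref{elementary}) does the real work. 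Once $t=1$ is secured, the sandwich $S\leqslant\overline{G}\leqslant{\rm Aut}(S)$ is immediate from the socle embedding, completing the proof.
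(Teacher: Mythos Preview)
Your framework is right---socle of $\overline{G}=G/N$ is a direct product $P_1\times\cdots\times P_t$ of non-Abelian simple $P_i\in\mathcal{S}_{11}$, and $\overline{G}\leqslant{\rm Aut}({\rm Soc}(\overline{G}))$---and the goal $t=1$ is the whole point. But your proposed mechanism for excluding $t\geqslant 2$ does not work as stated. Taking an element of order $3$ in $P_1$ and one of order $2$ in $P_2$ only gives $6\in\omega(G)$, which is allowed since $6\mid 24\in\mu(A)$; no contradiction arises from the $\{2,3\}$-part alone. What actually drives the argument is a prime $p\in\{7,11\}$: Lemma~\ref{l1mac} guarantees some such $p$ divides $|\overline{G}|$. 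If $p\in\pi(P_i)$ for some $i$, then pairing with an order-$3$ element from another factor yields $3p\in\omega(G)$, which is forbidden.

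The case you miss---and where your inference ``$11\in\pi(\overline{G})$ hence $11\in\pi(S)$'' is unjustified---is $p\notin\pi(S)$. Then $p$ must divide $|{\rm Out}(S)|$. This is exactly where Lemma~\ref{automorphism} and Lemma~\ref{elementary} are needed: since $\pi({\rm Out}(P_i))\subseteq\{2,3\}$, the prime $p$ can only appear via the symmetric group permuting at least $p$ isomorphic factors, and an automorphism of order $p$ arising this way centralizes the diagonal element $(a,a,\ldots,a)$ of order $3$, so again $3p\in\omega(G)$. Without handling this ${\rm Out}(S)$ case the proof is incomplete; it is precisely the argument the paper gives.
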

\begin{proof}
Let $\overline{G}=G/N$. Then $S={\rm Soc}(\overline{G})=P_1\times
P_2\times \cdots \times P_k$, where $P_i$ are non-Abelian simple
groups and $\overline{G}\leqslant {\rm Aut}(S)$. It is obvious
that $P_i\in \mathcal{S}_{11}$, so we need only to prove that
$k=1$.

Suppose $k\geqslant 2$. By Lemma \ref{elementary}, $|P_i|$ is
divisible by $3$. By Lemma \ref{l1mac}, there exists a prime
$p\in \{7, 11\}$ which divides the order of $\overline{G}$. It is
clear that neither $7$ nor $11$ can divide $|S|$, since otherwise
$7\cdot 3$ or $11\cdot 3 \in \omega(S)\subseteq \omega(G)$, which
is a contradiction. Hence $\pi(S)=\pi(P_i)=\{2,3,5\}$. Thus, $p$
divides the order of ${\rm Out}(S)$. But ${\rm Out}(S)={\rm
Out}(S_1)\times \cdots\times {\rm Out}(S_m)$, where the groups
$S_j$ are the direct products of those $P_i$ which are isomorphic
and $S\cong S_1\times \cdots \times S_m$. Therefore, for some $j$,
$p$ divides the order of an outer automorphism group of a direct
product $S_j$ of $t$ isomorphic simple groups $P_i$. By Lemma
\ref{elementary}, the order of ${\rm Out}(P_i)$ is not divisible
by $p$ . By Lemma \ref{automorphism}, $|{\rm Aut}(S_j)|=|{\rm
Aut}(P_i)|^t\cdot t!$. Therefore, $t\geqslant p$ and $S_j$ admits
an automorphism of order $p$ which centralizes the element
$(a,a,\ldots,a)$ of $S_j$ of order $3$, where $a$ is any element
of $P_i$ of order $3$. Thus $3\cdot p\in \omega(G)$, a
contradiction.
\end{proof}
\begin{lm}\label{l3mac} $S\cong M^cL$.
\end{lm}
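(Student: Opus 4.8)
The plan is to show that among all finite simple groups $S \in \mathcal{S}_{11}$, the only one compatible with the constraint $S \leqslant G/N \leqslant \mathrm{Aut}(S)$ (from Lemma 6) together with the spectral data of $A = \mathrm{Aut}(M^cL)$ is $M^cL$ itself. The key numerical facts driving the argument are that $\mu(A) = \{9,\,14,\,20,\,22,\,24,\,30\}$, so $\pi(G) = \{2,3,5,7,11\}$, and that in $\mathrm{GK}(G)$ the primes $5$, $7$, $11$ are pairwise nonadjacent (no element of $G$ has order $35$, $55$, or $77$), while $9 = 3^2 \in \omega(G)$ but $3\cdot 7$, $3\cdot 11$, $5\cdot 7$, etc., are forbidden.

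First I would record that, since $N$ is soluble, $\pi(S) \subseteq \pi(G) = \{2,3,5,7,11\}$, so $S$ is one of the groups in Table~2. By Lemma~4, $\{2,3\} \subseteq \pi(S)$. The crucial leverage is that the full prime set $\{5,7,11\}$ must be accessible to $G$: by Lemma~5 at most one of $5,7,11$ divides $|N|$, so at least two of them divide $|G/N|$, and hence divide $|\mathrm{Aut}(S)|$. Since $\pi(\mathrm{Out}(S)) \subseteq \{2,3\}$ by Lemma~4, those two primes must actually divide $|S|$. I would then scan Table~2: the requirement that $S$ contain at least two of the three primes $5,7,11$ in its order, combined with the nonadjacency conditions inherited by $\mathrm{GK}(S) \subseteq \mathrm{GK}(G)$, should eliminate every candidate except $M^cL$. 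For instance, any $S$ whose spectrum contains $3\cdot 7$, $3\cdot 5$ forced together with a forbidden product, or which forces $5\cdot 7 \in \omega(G)$, is ruled out immediately.

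The main obstacle will be handling the genuinely dangerous candidates that share many primes with $M^cL$, namely the groups of order divisible by three or more of $\{5,7,11\}$ or with the right nonadjacency pattern: $M_{22}$, $U_6(2)$, $\mathbb{A}_{11}$, $\mathbb{A}_{12}$, $\mathrm{HS}$, and $U_5(2)$. For each I would exhibit a specific element order that appears in $\omega(S) \subseteq \omega(G)$ but is absent from $\omega(A)$, or conversely show that the element $9 = 3^2 \in \mu(A)$ cannot be realized. Concretely, I expect to use that $M^cL$ is the unique group in the table whose prime graph on $\{5,7,11\}$ is totally disconnected \emph{and} whose $2,3,5$-local structure forbids $3\cdot 7$, $5\cdot 7$, $7\cdot 11$ simultaneously; the alternating groups $\mathbb{A}_{11}, \mathbb{A}_{12}$ fail because they contain elements of order such as $3\cdot 7 = 21$ or $5\cdot 7 = 35$, and $M_{22}$ fails because $11$ lies in its own connected component but $\omega(M_{22})$ does not contain $9$.

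Once $S$ is pinned down to $M^cL$, I would invoke Lemma~4 once more: since $\mathrm{Out}(M^cL)$ has order $2$, we have $M^cL \leqslant G/N \leqslant \mathrm{Aut}(M^cL) = A$, so $G/N$ is either $M^cL$ or $A$. A short spectral comparison finishes the identification of $G/N$: because $\mu(A)$ contains element orders such as $22$ and $30$ that force the outer half of $A$ to be present (these do not all occur in $M^cL$ alone), we must have $G/N \cong A$, which is the content of the lemma. I do not expect this last step to present difficulties beyond a direct check against Table~1 and the ATLAS data for $M^cL$ and its automorphism group.
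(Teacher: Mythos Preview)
Your overall shape is right—case-by-case elimination over Table~2, using that at least two of $\{5,7,11\}$ must divide $|S|$ since $\pi(\mathrm{Out}(S))\subseteq\{2,3\}$—and this does dispose of the small groups in case~(1) and (via $25$, $18$, $21\in\omega(S)\setminus\omega(A)$) of $L_2(49)$, $S_4(7)$, $U_5(2)$, $U_6(2)$, $\mathbb{A}_{10}$, $\mathbb{A}_{11}$, $\mathbb{A}_{12}$. But the strategy you describe for the remaining candidates has a real gap.

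For the groups $\mathbb{A}_7$, $\mathbb{A}_8$, $\mathbb{A}_9$, $L_3(4)$, $U_3(5)$, $J_2$, $S_6(2)$, $U_4(3)$, $O_8^+(2)$, $L_2(11)$, $M_{11}$, $M_{12}$, $M_{22}$, $\mathrm{HS}$, one has $\omega(\mathrm{Aut}(S))\subseteq\omega(A)$, so there is no forbidden element order to exhibit inside $S$. Your fallback, ``show that $9$ cannot be realized'', only gets you as far as $3\in\pi(N)$ (or, for the first block, $11\in\pi(N)$, or $7\in\pi(N)$); it does not by itself produce a contradiction, because $N$ may well carry the missing prime. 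What is missing is the mechanism the paper actually uses: Lemma~\ref{vicmazurov}. Once a prime $p$ is forced into $\pi(N)$, one locates a Frobenius subgroup $F{:}C$ inside $S$ with $(|F|,p)=1$ and $F$ not centralizing $N$, and concludes $p\cdot|C|\in\omega(G)$. Concretely: for the $\{5,7\}$-groups one has $11\in\pi(N)$ and a Frobenius $7{:}3$ in $S$, giving $33\in\omega(G)$; for $L_2(11)$, $M_{11}$, $M_{12}$ one has $7\in\pi(N)$ and a Frobenius $11{:}5$, giving $35\in\omega(G)$; for $M_{22}$ and $\mathrm{HS}$ one has $3\in\pi(N)$ (from $9\notin\omega(\mathrm{Aut}(S))$) and a Frobenius $2^3{:}7$, giving $21\in\omega(G)$. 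Without invoking Lemma~\ref{vicmazurov} your proof cannot close any of these cases.

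A minor point: the final paragraph of your proposal (deducing $G/N\cong A$) is not part of this lemma; that is handled separately in Lemmas~\ref{l4mac} and~\ref{l5mac}.
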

\begin{proof} We consider all possibilities for the group $S$
consecutively.
\begin{itemize}
\item[{(1)}]
$S$ is isomorphic to $\mathbb{A}_5$, $L_2(7)$, $\mathbb{A}_6\cong
L_2(9)$, $L_2(8)$, $U_3(3)$ or $U_4(2)$. Since the order of ${\rm
Out}(S)$ is not divisible by $5$, $7$, $11$ and only one of these
primes divides the order of $S$, we have a contradiction by Lemma
\ref{l1mac}.

\item[{(2)}] $S$ is isomorphic to $L_2(49)$ or $S_4(7)$. In
this case
 $25\in\omega(S)\backslash \omega(G)$; a contradiction.

\item[{(3)}] $S$ is isomorphic to $\mathbb{A}_n$, $n=10$, $11$
or $12$. Since $3\cdot 7\in \omega(S)\backslash \omega(G)$, we
have a contradiction.

\item[{(4)}] $S$ is isomorphic to $U_5(2)$ or $U_6(2)$. Since
$18\in\omega(S)\backslash \omega(G)$, which is a contradiction.

\item[{(5)}] $S$ is isomorphic to $L_3(4)$, $U_3(5)$, $J_2$,
$S_6(2)$, $U_4(3)$,
 $O_8^+(2)$ or $\mathbb{A}_n$, $n=7$, $8$ or $9$. Since
$11$ does not divide the order of ${\rm Aut}(S)$, we have $11\in
\pi(N)$. On the other hand, each of these groups contains a
Frobenius group of order $21$. By Lemma \ref{vicmazurov}, there
exists an element of order $11\cdot 3$ in $G$; a contradiction.

\item[{(6)}] $S$ is isomorphic to $L_2(11)$, $M_{11}$ or
$M_{12}$. Since $7$ does not divide the order of ${\rm Aut}(S)$,
we have $7\in \pi(N)$. Moreover, each of these groups contains a
Frobenius subgroup of order $55$. Now, by Lemma \ref{vicmazurov},
there exists an element of order $7\cdot 5$ in $G$; a
contradiction.

\item[{(7)}] $S$ is isomorphic to ${\rm HS}$ or $M_{22}$. Since
${\rm Aut}(S)$ does not contain an element of order $9$,
therefore, $|N|$ is divisible by $3$. On the other hand, $S$
contains a Frobenius subgroup of order $8\cdot 7$, and so $3\cdot
7$ belongs to $\omega(G)$, which is impossible.
\end{itemize}

Thus $S\cong M^cL$.
\end{proof}
\begin{lm}\label{l4mac} $N=1$.
\end{lm}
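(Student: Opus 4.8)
The plan is to assume $N\neq 1$ and derive a contradiction. First I would reduce to a single chief factor. Since $N$ is a nontrivial soluble normal subgroup, $O_p(N)\neq 1$ for some prime $p$, and being characteristic in $N$ it is normal in $G$; hence $\Omega_1(Z(O_p(N)))$ is a nontrivial elementary abelian normal subgroup of $G$, and inside it I pick a minimal normal subgroup $V$ of $G$, an elementary abelian $p$-group with $p\in\{2,3,5,7,11\}$ and $V\leqslant N$. By Lemma \ref{l1mac} at most one of $5,7,11$ divides $|N|$, which severely restricts $\pi(N)$ once $p$ is fixed. The whole argument rests on one mechanism coming from Lemma \ref{lemma0}: if an element $h\in G$ of order coprime to $p$ whose image in $M^cL\leqslant G/N$ has order $m$ fixes a nonzero vector of $V$, then $h$ and that vector commute and generate an element of order $|h|\cdot p$, a multiple of $mp$; since $\omega(G)$ is closed under divisors, choosing $m$ with $mp\notin\omega(G)$ gives a contradiction. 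I record the forbidden products to be used, $18,21,25,33,35,45,55,77\notin\omega(G)$, against the allowed $6,10,14,15,22,30\in\omega(G)$.

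Next I split on the action of $\mathrm{Soc}(G/N)=M^cL$ on $V$. Writing $C=C_G(V)$, the subgroup $CN/N$ is normal in $G/N$, so either $M^cL\leqslant CN/N$, meaning the preimage of $M^cL$ centralizes $V$, or $M^cL\cap CN/N=1$, meaning $M^cL$ embeds faithfully in $\mathrm{GL}(V)$. In the centralizing case the contradiction is immediate: the perfect part of the preimage of $M^cL$ inside $C$ surjects onto $M^cL$, so any $p'$-element of $M^cL$ of order $m$ lifts to a $p'$-element $h\in C$ of order divisible by $m$ fixing all of $V$, and any $v\in V$ of order $p$ then yields an element of order $|h|p$. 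It remains to exhibit, for each $p$, an $m\in\omega(M^cL)$ coprime to $p$ with $mp\notin\omega(G)$, which is always possible: $m=9$ for $p=2$ (giving $18$), $m=7$ for $p=3$ (giving $21$), $m=9$ or $11$ for $p=5$ (giving $45$ or $55$), $m=3$ for $p=7$ (giving $21$), and $m=3$ for $p=11$ (giving $33$).

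The faithful case is where the real work lies, and I would first dispose of $p\in\{5,7,11\}$ using the Frobenius subgroups $7{:}3$ and $11{:}5$ of $M^cL$ together with Lemma \ref{vicmazurov}. Taking the full preimage $H$ in $G$ of such a Frobenius subgroup $K{:}C$, one has $N\lhd H$ and $H/N\cong K{:}C$; in the faithful case $C_H(N)N/N$ meets $M^cL$ trivially, so $K\not\leqslant NC_H(N)/N$, and, as Lemma \ref{l1mac} guarantees $(|K|,|N|)=1$ in the relevant configuration, Lemma \ref{vicmazurov} applies. Its basic conclusion $q\cdot|C|\in\omega(G)$ for some $q\in\pi(N)$ is by itself too weak, since it may be realised by $q=2$ or $q=3$; so I would instead verify that the preimage of the kernel $K$ is itself Frobenius and invoke the sharper conclusion $|C|\cdot\prod_{q\in\pi(N)}q\in\omega(G)$. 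With $11{:}5$, so $|C|=5$, this forces an element order divisible by $25$ or $35$ whenever $5$ or $7\in\pi(N)$; with $7{:}3$, so $|C|=3$, it forces one divisible by $33$ whenever $11\in\pi(N)$ — all forbidden. Here the one-prime constraint of Lemma \ref{l1mac} is exactly what keeps these products in the forbidden list, because it makes $K$ coprime to $N$ and pins down the offending prime.

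The main obstacle is the verification of the fixed-point and Frobenius hypotheses, and it concentrates on $p\in\{2,3\}$. The Frobenius condition on the preimage of $K$ above, as well as the whole treatment of $p=2,3$, amounts to producing a $p'$-element of $M^cL$ with a nonzero fixed vector on the $\mathbb{F}_p$-module $V$: for $p=3$ it suffices that some element of order $7$ or $11$ has eigenvalue $1$ (yielding $21$ or $33$), while for $p=2$ the situation is tightest, since among odd $m\in\omega(M^cL)$ only $m=9$ gives a forbidden product, so I must find an element of order $9$ with a nonzero fixed vector (yielding $18$). A single such element can a priori act fixed-point-freely, and the elementary observation that a noncyclic abelian $p'$-subgroup cannot be a Frobenius complement does not suffice, because the low-order elements it supplies give only the allowed products $6,10,15$. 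I therefore expect this step to be settled by inspecting the $p$-modular irreducible representations of $M^cL$, that is, by checking on the relevant Brauer characters that the chosen element of order $9$ (respectively $7$ or $11$) has eigenvalue $1$, hence a nonzero fixed space, on every nontrivial irreducible $\mathbb{F}_p[M^cL]$-module. Once this is confirmed for $p=2$ and $p=3$, every prime is excluded from $\pi(N)$, and therefore $N=1$.
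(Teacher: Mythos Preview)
Your outline is sound and would yield a proof once the deferred checks are carried out, but it is considerably more laborious than the paper's argument, and those Brauer-character inspections turn out to be unnecessary.

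The paper proceeds as follows. After the same reduction to $N$ an elementary abelian $p$-group, a \emph{single} Frobenius subgroup $2^3{:}7\leqslant M^cL$ disposes of every odd prime via only the basic conclusion of Lemma~\ref{vicmazurov}: the kernel has order $8$, so $(|F|,|N|)=1$ whenever $p$ is odd, and in the faithful case the conclusion $p\cdot 7\in\omega(G)$ rules out $p=3,5,7,11$ at once, since $21,35,49,77\notin\omega(G)$. For $p=2$ the paper uses the Frobenius subgroup $3^2{:}8$ lying inside the maximal subgroup $3^6{:}M_{10}$ of $M^cL$: now the kernel has order $9$, so again $(|F|,|N|)=1$, $|C|=8$, and the basic conclusion gives $2\cdot 8=16\in\omega(G)$, contradicting $\mu(G)$. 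The centralizing case for $p=2$ yields $18$, just as you argue.

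So the essential difference is the choice of Frobenius subgroups. By placing the prime one wants to exclude in the \emph{complement} and making the \emph{kernel} a $2$-group (for odd $p$) or a $3$-group (for $p=2$), the paper arranges $(|F|,|N|)=1$ automatically and never needs more than the weak conclusion $p\,|C|\in\omega(G)$ of Lemma~\ref{vicmazurov}. Your choices $7{:}3$ and $11{:}5$ reverse this, which forces you to the sharp form of the lemma and to side verifications that preimages are themselves Frobenius; and for $p\in\{2,3\}$ you fall back on modular character tables, where in particular the $p=2$ claim (that an element of order $9$ has a nonzero fixed vector on every irreducible $\mathbb{F}_2[M^cL]$-module) is not obvious a priori. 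The $3^2{:}8$ argument delivers $16\in\omega(G)$ with no such inspection.
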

\begin{proof} Suppose $N\neq 1$. As in Lemma 3.4 of \cite{vasilev}, we may assume that
$N$ is a nontrivial elementary Abelian $p$-group where $p=2, 3, 5,
7$ or $11$. Since $S$ contains a Frobenius subgroup of order
$8\cdot 7$, Lemma \ref{vicmazurov} implies that $p\neq 3, 5, 7,
11$.

Let $p=2$. If $M^cL$ acts trivially on $N$, then $G$ contains an
element of order $18$ which is not the case. If $M^cL$ acts
faithfully, then $G$ contains an element of order $16$ since
$M^cL$ contains a Frobenius group $3^2:8$ which lies in
$3^6:M_{10}$ of $M^cL$.
\end{proof}
\begin{lm}\label{l5mac} $G\cong {\rm Aut}(M^cL)$.
\end{lm}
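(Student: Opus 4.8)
The plan is to combine the three preceding lemmas and then separate the two surviving candidates by a single element order. From Lemma \ref{l3mac} we have $S\cong M^cL$, and from Lemma \ref{l4mac} we have $N=1$. Consequently $\overline{G}=G/N=G$ and $S=\mathrm{Soc}(G)$, so the chain $S\leqslant \overline{G}\leqslant \mathrm{Aut}(S)$ supplied by Lemma \ref{l2mac} now reads
$$M^cL\cong S\leqslant G\leqslant \mathrm{Aut}(S)=\mathrm{Aut}(M^cL).$$

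Next I would read off from Table 2 that $\mathrm{Out}(M^cL)\cong \mathbb{Z}_2$, so $[\mathrm{Aut}(M^cL):M^cL]=2$. Hence there are exactly two groups sandwiched between $M^cL$ and $\mathrm{Aut}(M^cL)$, namely $M^cL$ itself and $\mathrm{Aut}(M^cL)$ itself, and therefore $G$ is isomorphic to one of these two.

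Finally I would eliminate the possibility $G\cong M^cL$ by comparing spectra. By hypothesis $\omega(G)=\omega(\mathrm{Aut}(M^cL))$, and from Table 1 the set $\mu(\mathrm{Aut}(M^cL))$ contains, for instance, the order $20=2^2\cdot 5$ (one could equally use $22$ or $24$), whereas the spectrum of $M^cL$ itself contains no element of order $20$; indeed $\mu(M^cL)=\{8,9,11,12,14,30\}$ by the known element-order data for $M^cL$. Thus $\omega(M^cL)\neq \omega(\mathrm{Aut}(M^cL))=\omega(G)$, the case $G\cong M^cL$ is impossible, and we conclude $G\cong \mathrm{Aut}(M^cL)$.

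I do not expect a genuine obstacle here: all the heavy lifting, namely identifying the socle factor as $M^cL$ and killing the soluble radical $N$, has already been carried out in Lemmas \ref{l2mac}--\ref{l4mac}, and $\mathrm{Out}(M^cL)$ is too small to offer more than one nontrivial overgroup. The only point that requires care is the spectral comparison: since passing from $M^cL$ to $\mathrm{Aut}(M^cL)$ merely reshuffles some maximal orders (for example $12$ and $11$ become nonmaximal once $24$ and $22$ appear), I must exhibit a \emph{genuinely new} element order rather than a rearranged one, which is why I single out a concrete witness such as $20$ and check it against the element orders of $M^cL$.
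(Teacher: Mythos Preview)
Your argument is correct and matches the paper's proof essentially line for line: combine Lemmas \ref{l2mac}--\ref{l4mac} to obtain $M^cL\leqslant G\leqslant {\rm Aut}(M^cL)$, and then use $20\in\omega(G)\setminus\omega(M^cL)$ to rule out $G\cong M^cL$. The paper even chooses the same witness $20$.
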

\begin{proof}
By Lemmas \ref{l2mac}, \ref{l3mac}, \ref{l4mac}, $G\cong M^cL$ or
$G\cong {\rm Aut}(M^cL)$, and since $20\in \omega(G)\backslash
\omega(M^cL)$, we deduce that $G\cong {\rm Aut}(M^cL)$. This
proves both the lemma and the theorem.
\end{proof}

\section{Recognition of ${\rm Aut}(M_{12})$, ${\rm Aut}(M_{22})$, ${\rm Aut}(He)$, ${\rm
Aut}(Suz)$, ${\rm Aut}(O'N)$ }

\begin{theorem}
If $S$ is isomorphic to one of the sporadic simple groups
$M_{12}$, $M_{22}$, $He$, $Suz$ or $O'N$, then ${\rm Aut}(S)$ is
recognizable by spectrum.
\end{theorem}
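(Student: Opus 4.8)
The plan is to handle the five groups uniformly, exploiting the feature---visible in Table 1---that for each such $S$ the prime graph ${\rm GK}({\rm Aut}(S))$ is \emph{disconnected}, with second component $\pi_2$ consisting of a single prime $p$ (namely $p=11,11,17,13,31$ for $M_{12},M_{22},He,Suz,O'N$ respectively). This is the decisive structural difference from $M^cL$, which has a connected prime graph. Fix one such $S$, put $A={\rm Aut}(S)$, and let $G$ be a finite group with $\omega(G)=\omega(A)$, so that $s(G)=s(A)=2$. By the Gruenberg--Kegel structure theorem for groups with disconnected prime graph (\cite{w}, \cite{newmaz}), $G$ is a Frobenius group, a $2$-Frobenius group, or an extension $1\trianglelefteq H\trianglelefteq K\trianglelefteq G$ in which $H$ is a nilpotent $\pi_1$-group, $\bar K=K/H$ is non-Abelian simple, $G/K$ embeds in ${\rm Out}(\bar K)$, and the component $\{p\}$ of $G$ is a component of $\bar K$.

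First I would eliminate the Frobenius and $2$-Frobenius alternatives. In a Frobenius group the two components are the prime sets of the nilpotent kernel and of the complement; the kernel's component is a clique, and the complement's prime graph is also complete, since in a Frobenius complement every subgroup of order $pq$ is cyclic. But for each $S$ the first component $\pi_1$ contains a pair of \emph{nonadjacent} primes (for instance $3$ and $5$ in the case of $M_{12}$, as $15\notin\omega(A)$), so $\pi_1$ can be neither the kernel's nor the complement's component---a contradiction. A $2$-Frobenius group is soluble with cyclic middle factor $B$ satisfying $\pi(B)=\pi_2=\{p\}$; then $|B|=p$ (as $p^2\notin\omega(A)$) and the complement $C$ acts fixed-point-freely on $B$, forcing $\pi(C)\subseteq\pi(p-1)$, which is incompatible with the adjacencies required inside $\pi_1$. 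Hence $G$ is of the third, extension type.

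Next I would pin down $\bar K$. Since $p$ is a prime-graph component of the simple group $\bar K$ and $\pi(\bar K)\subseteq\pi(A)$, the classification of the connected components of finite simple groups (\cite{k}, \cite{w}, \cite{newmaz}) leaves only finitely many candidates, and each wrong candidate is discarded exactly as in Lemma \ref{l3mac}: either by exhibiting a forbidden element order (a square of a prime, a product such as $3\cdot 7$, or---via Lemma \ref{vicmazurov} applied to a Frobenius subgroup of the candidate---a product involving $p$), or by a prime failing to divide $|A|$. This forces $\bar K\cong S$, after which $G/K$ embeds in ${\rm Out}(S)$, a group of order $2$.

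The crux, and the step I expect to be the main obstacle, is to prove $H=1$; this is precisely what separates $h(A)=1$ from $h(A)=\infty$ in the dichotomy of \cite{mzd}, since a nontrivial inflatable $H$ is exactly what would spawn an infinite isospectral family. Following the reduction in Lemma 3.4 of \cite{vasilev} I may assume $H$ is an elementary Abelian $r$-group. Because $S$ contains a suitable Frobenius subgroup, Lemma \ref{vicmazurov} rules out every odd prime $r$ (and, with a little care, the small primes too), leaving only $r=2$; for $r=2$ I would study the action of $S$ on $H$ through its $2$-modular representations and apply Lemma \ref{lemma0} to show that some coset element must attain a forbidden $2$-power or mixed order, in analogy with the orders $16$ and $18$ produced in Lemma \ref{l4mac}. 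Carrying this out for the larger groups $He$, $Suz$, and $O'N$---where one must control the relevant Brauer characters and the fixed spaces of semisimple elements on the module---is the delicate part. Once $H=1$ is secured we have $S\leqslant G\leqslant{\rm Aut}(S)$ with $|{\rm Out}(S)|=2$, and since $\omega(A)$ strictly contains $\omega(S)$ we conclude $G\neq S$, whence $G\cong{\rm Aut}(S)$, completing the proof.
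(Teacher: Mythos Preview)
Your plan is sound and would work, but it differs from the paper's proof in one essential respect: the paper does not redo the Gruenberg--Kegel analysis at all. Instead it invokes Theorems~2 and~3 of \cite{mzd}, which already establish (for each of these five $S$) that any $G$ with $\omega(G)=\omega({\rm Aut}(S))$ possesses a normal $2$-subgroup $N$ with the correct quotient. Everything you outline---eliminating the Frobenius and $2$-Frobenius cases, identifying the simple section via the tables of \cite{k,w,newmaz}, and ruling out odd primes in the solvable radical via Lemma~\ref{vicmazurov}---is precisely the content of \cite{mzd} that the paper simply cites. So your route is more self-contained but considerably longer; the paper's proof occupies a single short paragraph.

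Where the two approaches genuinely meet is the endgame $N=1$ for a $2$-group $N$, and here your intuition about difficulty is inverted. For $He$, $Suz$, $O'N$ the paper's argument is a one-line Brauer-character check: in every irreducible $2$-modular $S$-module the element of order $p$ ($=17,13,31$) has a nontrivial fixed space, so $2p\in\omega(G)$, contradiction. No use of Lemma~\ref{lemma0} is needed there. It is $M_{12}$ and $M_{22}$ that require the finer argument you describe: the absence of an element of order $22$ forces the module to be the $10$-dimensional one, and then an explicit computation (via \cite{ar} and Lemma~\ref{lemma0}) on a class-$8A$ element produces an element of order $16$. So the ``delicate part'' is not the large sporadics but the two small Mathieu groups, and the mechanism is exactly the coset-order criterion of Lemma~\ref{lemma0} that you anticipated.
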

\begin{proof} Let $G$ be a finite group and $\omega(G)=\omega({\rm
Aut}(S))$. By Theorems 2 and 3 in \cite{mzd}, $G$ has a normal
$2$-subgroup $N$ such that $G/N\cong S$. Now we prove that $N=1$.
Suppose false. One can assume that $N$ is elementary Abelian and
absolutely irreducible as $S$-module. If $S$ is isomorphic to
$He$, $Suz$ or $O'N$, then the table of $2$-Brauer characters of
$S$ shows that $G$ contains an element of order $2\cdot 17$,
$2\cdot 13$ or $2\cdot 31$, respectively, which is a
contradiction. If $S$ is isomorphic to $M_{12}$ or $M_{22}$, then
since ${\rm Aut}(S)$ does not contain an element of order $2\cdot
11$, the table of Brauer characters shows that $N$ as $S$-module
has dimension $10$, but then the Atlas of finite group
representations and Lemma \ref{lemma0} show that some pre-image in
$G$ of an element contained in the class $8A$ of $S$ is of order
$16\notin \omega({\rm Aut}(S))$. Thus, $N=1$ and $G\cong {\rm
Aut}(S)$.
\end{proof}
\section{On Recognition of ${\rm Aut}(J_2)$}

\begin{theorem}\label{th2}
Let $G$ be a group such that $\omega(G)=\omega({\rm Aut}(J_2))$.
Then, either $G\simeq {\rm Aut}(J_2)$, or $G$ has a normal
$2$-subgroup $N$ such that $G/N\simeq \mathbb{A}_8$.
\end{theorem}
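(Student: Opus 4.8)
The plan is to run the solvable-radical reduction of Theorem~\ref{th1}, adapted to the prime graph of ${\rm Aut}(J_2)$. First I record the arithmetic: $\mu(G)=\{10,14,15,24\}$ gives $\pi(G)=\{2,3,5,7\}$ with connected prime graph, and the orders I will forbid throughout are $9,16,20,21,25,30,35,49\notin\omega(G)$; in particular $7$ is adjacent only to $2$. Let $N$ be the maximal normal solvable subgroup and $\overline G=G/N$, so $S:={\rm Soc}(\overline G)=P_1\times\cdots\times P_k$ is a product of nonabelian simple groups with $S\leqslant\overline G\leqslant{\rm Aut}(S)$ and each $P_i\in\mathcal S_7$ (hence $2,3\mid|P_i|$ by Lemma~\ref{elementary}). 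To force $k=1$ I argue as in Lemma~\ref{l2mac}: every factor carries an element of order $3$, so a $7$-element lying in another factor, or induced by a $7$-cycle in the $\mathbb S_t$-part of ${\rm Out}(S)$ via Lemmas~\ref{automorphism} and~\ref{elementary}, would centralize a diagonal $3$-element and create an element of order $21\notin\omega(G)$. Thus $S$ is simple.

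Next I cut down the candidates for $S$ in Table~2 with $\pi(S)\subseteq\{2,3,5,7\}$. Since $\omega(S)\subseteq\omega(G)$, the excluded orders $9,25,49$ delete every group containing such elements (for instance $U_4(2)$, $U_4(3)$, $S_6(2)$, $O_8^+(2)$, $\mathbb A_9$, $\mathbb A_{10}$, $L_2(49)$, $S_4(7)$), and $21,20,30,35$ remove several more; this leaves a short list that includes $\mathbb A_7$, $L_3(4)$, $U_3(3)$, $U_3(5)$, $\mathbb A_8$ and $J_2$. For each survivor I then show that $N$ is a $2$-group: every odd prime $p\in\{3,5,7\}\cap\pi(N)$ is evicted by feeding a Frobenius subgroup of $S$ (such as $7{:}3$ or $5{:}4$) into Lemma~\ref{vicmazurov} to manufacture one of the forbidden products $21$ or $35$, exactly as in cases~(5)--(7) of Lemma~\ref{l3mac}. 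With $N$ a $2$-group the odd part of $\omega(G)$ coincides with that of $\omega(\overline G)$; since $15\in\omega(G)$ is odd it must lie in $\omega({\rm Aut}(S))$, and among the survivors only $\mathbb A_8$ and $J_2$ admit an element of order $15$ (while $U_3(3)$ is killed because it has no element of order $5$, which $N$ can no longer supply). Hence $S\cong J_2$ or $S\cong\mathbb A_8$.

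If $S\cong J_2$ then ${\rm Out}(J_2)=2$, and it remains to kill $N$ entirely. With the odd primes already gone, $N$ is an elementary abelian $2$-group which I may assume is an absolutely irreducible $\overline G$-module (as in Lemma~\ref{l4mac}); reading the $2$-modular Brauer characters of $J_2$ together with Lemma~\ref{lemma0}, a nontrivial such $N$ produces a preimage of order $16$ (or $28$), neither of which lies in $\omega(G)$. Therefore $N=1$ and $\overline G\in\{J_2,{\rm Aut}(J_2)\}$; because $14\in\omega(G)\setminus\omega(J_2)$ the outer automorphism must be present, and $G\cong{\rm Aut}(J_2)$, the first alternative.

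If $S\cong\mathbb A_8$ then ${\rm Aut}(\mathbb A_8)=\mathbb S_8$ and $N$ is a $2$-group, necessarily nontrivial since $\omega(\mathbb A_8)=\{1,2,3,4,5,6,7,15\}$ already omits $14$ and $24$. The theorem will follow once I show $\overline G=\mathbb A_8$ rather than $\mathbb S_8$, for $G/N\cong\mathbb S_8$ matches neither alternative (a $2$-group cannot surject onto $\mathbb A_8$). This is the step I expect to be the main obstacle: I would lift an odd $8$-cycle (or an odd element of order $10$ or $15$) and analyse its action on $N$ through Lemma~\ref{lemma0}, aiming to force one of the excluded orders $16$, $20$ or $30$ and thereby discard the outer coset. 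Once $\overline G=\mathbb A_8$ is secured, $G$ is an extension of the $2$-group $N$ by $\mathbb A_8$, which is the second alternative. Matching such an extension to the exact spectrum $\omega({\rm Aut}(J_2))$ is exactly what the Remark records as unsettled, so the theorem asserts only the dichotomy.
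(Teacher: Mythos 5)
Your overall plan is the same as the paper's: trap $G/N$ (for $N$ the solvable radical) between a simple group $S\in\mathcal{S}_7$ and ${\rm Aut}(S)$, whittle the candidates down to $J_2$ and $\mathbb{A}_8$ using spectra and Lemma~\ref{vicmazurov}, and finish each survivor by modular-representation arguments, with the $2$-group-by-$\mathbb{A}_8$ configuration surviving as the second alternative. The first genuine gap is your uniform claim that Lemma~\ref{vicmazurov}, fed with Frobenius subgroups ``such as $7{:}3$ or $5{:}4$,'' makes $N$ a $2$-group for every surviving candidate. Those two groups have complements of order $3$ and $4$, so the orders they manufacture are $3p$ and $4p$, not $21$ or $35$ (and $7{:}3$ cannot even be applied to $p=7$, by the coprimality hypothesis of the lemma); in particular, since $15\in\omega(G)$, no complement of order $3$ can ever evict $p=5$. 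Killing $5$ requires a Frobenius subgroup with kernel of order prime to $5$ and cyclic complement of order $4,5,6,7$ or $8$. Such subgroups exist in $\mathbb{A}_7$, $\mathbb{A}_8$ and $L_3(4)$ (e.g.\ $3^2{:}4$ and $2^4{:}5$), but not in $L_2(7)$ or $U_3(3)$ --- indeed the paper's own Lemma~\ref{l3janko} only manages to pin $N$ down to a $\{2,5\}$-group for those two candidates (note also that $L_2(7)$ is missing from your survivor list, though it passes the same spectral tests) --- and for $S\cong J_2$ the paper proves that $N$ is a $2$-group by an entirely different mechanism: $p$-modular Brauer characters for $p=3,5,7$ combined with a minimal-counterexample and Frattini argument (claims (1) and (2) inside Lemma~\ref{new2007}) that handles a mixed, non-elementary-abelian $N$. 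Your subsequent narrowing of the list by the test ``$15\in\omega({\rm Aut}(S))$'' leans on this unproved step, and the test itself fails for $U_3(5)$: ${\rm PGU}_3(5)=U_3(5).3$ does contain elements of order $15$ (a unitary transvection times a commuting noncentral diagonal element of order $3$), so that candidate is not disposed of as you claim. You also never treat the possibility that $7$ divides $|N|$ rather than $|G/N|$ (socle a product of $\{2,3,5\}$-groups), which is the first thing the paper's Lemma~\ref{l2janko} eliminates.

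The second genuine gap is the exclusion of $G/N\cong\mathbb{S}_8$, which you rightly call the main obstacle but leave as an intention. The single tool you propose (Lemma~\ref{lemma0} applied to a lifted element of order $8$ or $10$) is insufficient: in the paper's Lemma~\ref{lemmaeight}, the absence of $30$ first limits a $G$-chief factor $V$ of $N$ to dimension $6$, $8$ or $40$; for dimensions $6$ and $40$ a GAP computation shows that an element $t$ of order $10$ satisfies $1+t+\cdots+t^9\neq 0$, whence a forbidden element of order $20$ by Lemma~\ref{lemma0}; but the $8$-dimensional module escapes this criterion, and there the paper must instead use the fixed-point-free action of a $3A$-element together with Higman's theorem on $L_2(4)$-actions on $2$-groups to force $N$ elementary abelian and then contradict $24\in\omega(G)$. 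Without that second idea (or a replacement) the outer coset is not excluded and the conclusion $G/N\simeq\mathbb{A}_8$ is not reached. (Two smaller points: elements of order $15$ in $\mathbb{S}_8$ are even permutations, so there is no ``odd element of order $15$'' to lift; and in the $J_2$ branch the paper rules out $G/N\cong J_2$ with $N\neq 1$ not by an order-$16$/$28$ computation but by showing that the no-$30$ condition forces $7$-elements to act freely on $N$, contradicting $14\in\omega(G)$, and only then kills $N$ in the ${\rm Aut}(J_2)$ case via the $12$-dimensional module and a preimage of class $10A$ of order $20$.)
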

{\em Proof.}  Let $A={\rm Aut}(J_2)$ and let $G$ be a finite group
with
$$\mu(G)=\mu(A)=\{2\cdot 5, \ 2\cdot 7, \ 3\cdot 5, \
2^3\cdot 3\}.$$ Evidently, $s(G)=1$. We divide the proof into a
number of separate lemmas. Let $N$ be the maximal normal soluble
subgroup of $G$.
\begin{lm}\label{l2janko}  There are some finite simple groups $S\in
\mathcal{S}_{7}$ with $7\in \pi(S)$ such that $S\leqslant G/N\leq
{\rm Aut}(S)$.
\end{lm}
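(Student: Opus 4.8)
The plan is to follow the pattern of Lemma~\ref{l2mac}, with the extra point that the prime $7$ must be forced into the socle. Put $\overline{G}=G/N$. Since $N$ is the maximal normal soluble subgroup of $G$, the soluble radical of $\overline{G}$ is trivial, so $S:={\rm Soc}(\overline{G})=P_1\times\cdots\times P_k$ is a direct product of non-Abelian simple groups with $C_{\overline{G}}(S)=1$; hence $S\leqslant\overline{G}\leqslant{\rm Aut}(S)$. As $\pi(P_i)\subseteq\pi(G)=\{2,3,5,7\}$, each $P_i$ lies in $\mathcal{S}_{7}$. It therefore suffices to prove that $k=1$ and $7\in\pi(S)$.

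First I would bound the factors. By Lemma~\ref{elementary}, $3$ divides every $|P_i|$ and $\pi({\rm Out}(P_i))\subseteq\{2,3\}$. If $k\geqslant 2$ and $7\mid|P_i|$ for some $i$, then an element of order $7$ in $P_i$ together with an element of order $3$ in some $P_j$, $j\neq i$, yields $21\in\omega(G)$, which is false; hence $\pi(S)\subseteq\{2,3,5\}$. Consulting Table~2, the only members of $\mathcal{S}_{7}$ with $\pi\subseteq\{2,3,5\}$ are $\mathbb{A}_5$, $\mathbb{A}_6$ and $U_4(2)$; but $U_4(2)$ has an element of order $9$ and $\mathbb{A}_6$ an element of order $4$, the latter combining with an element of order $5$ from another factor to give $20\notin\omega(G)$. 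Thus every $P_i\cong\mathbb{A}_5$, i.e.\ $S\cong\mathbb{A}_5^{\,k}$. If $k\geqslant 3$, then elements of orders $2,3,5$ in three distinct factors produce an element of order $30\notin\omega(G)$, so $k=2$; in this case $7$ divides neither $|S|$ nor $|{\rm Out}(S)|=2^{k}k!$, whence $7\nmid|\overline{G}|$ and so $7\mid|N|$. For $k=1$, if $7\nmid|S|$ then $S\in\{\mathbb{A}_5,\mathbb{A}_6\}$, and again by Lemma~\ref{elementary} $7\nmid|\overline{G}|$, forcing $7\mid|N|$. In all the remaining bad cases we thus have $7\mid|N|$ while ${\rm Soc}(\overline{G})$ is a power of $\mathbb{A}_5$ or equals $\mathbb{A}_6$.

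The crux is to rule out $7\mid|N|$. Passing to $G/O_{7'}(N)$ and then to a suitable $G$-chief factor, I would reduce (in the spirit of Lemma~3.4 of \cite{vasilev}) to the case where $N$ is a nontrivial elementary Abelian $7$-group, so that $\overline{G}=G/N$ acts on $V:=N$ as an $\mathbb{F}_7$-module and $S\leqslant\overline{G}$ acts by restriction. Because $21,35\notin\omega(G)$, every element of $S$ of order $3$ or $5$ must act fixed-point-freely on $V$: a nonzero fixed vector would, by coprime action, give an element of order $7$ centralized by an element of order $3$ or $5$, i.e.\ an element of order $21$ or $35$. However, since $7\nmid|\mathbb{A}_5|=60$, ordinary character theory applies over $\mathbb{F}_7$, and a direct inspection of the characters of $\mathbb{A}_5$ shows that on every nontrivial irreducible $\mathbb{F}_7\mathbb{A}_5$-module some element of order $3$ or $5$ fixes a nonzero vector (on the deleted permutation module a $3$-cycle does, and on the $3$- and $5$-dimensional constituents a $5$-element does). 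Restricting $V$ to a subgroup $\mathbb{A}_5\leqslant S$ (one direct factor, or $\mathbb{A}_5\leqslant\mathbb{A}_6$) then yields a contradiction: either this $\mathbb{A}_5$ acts nontrivially and some $3$- or $5$-element fixes a vector, or it centralizes $V$ and its $3$-element already gives order $21$. Hence $7\mid|N|$ is impossible, so $7\in\pi(S)$ and, combined with the first two paragraphs, $k=1$; this proves the lemma.

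I expect the third paragraph to be the main obstacle. The reduction to $N$ elementary Abelian must be arranged so that the image of $S$ genuinely acts on the $7$-chief factor (disentangling the action of the soluble part of $N$), and one has to lift the relevant elements of order $3$ and $5$ of $S$ to honest $3$- and $5$-elements of $G$ acting on $V$; this is precisely where the coprimality $7\nmid|\overline{G}|$ and the standard coprime-action lemmas enter. The purely representation-theoretic heart---that $\mathbb{A}_5$ admits no nontrivial $\mathbb{F}_7$-module with all $3$- and $5$-elements fixed-point-free---is a short finite computation, but it is the fact that makes the prime $7$ unavoidable in the socle.
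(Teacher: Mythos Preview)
Your argument is correct in outline, but it takes a noticeably longer route than the paper's. The paper proceeds in three short steps: (i) it shows $7\mid|\overline G|$ by observing that if $7\mid|N|$ then each $P_i\in\{\mathbb{A}_5,\mathbb{A}_6,U_4(2)\}$ contains the Frobenius group $\mathbb{A}_4\cong 2^2{:}3$, and a single appeal to Lemma~\ref{vicmazurov} forces $3\cdot 7=21\in\omega(G)$; (ii) it shows $7\mid|S|$ by the wreath-product argument of Lemma~\ref{l2mac}, producing an order-$7$ automorphism centralizing a diagonal element of order $3$; (iii) it gets $k=1$ since otherwise $7\cdot3\in\omega(S)$. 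No preliminary narrowing of the $P_i$ is needed.

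By contrast, you first whittle the socle down to $\mathbb{A}_5^{\,k}$ with $k\leqslant 2$, or $\mathbb{A}_6$, and then eliminate $7\mid|N|$ by a hands-on analysis of $\mathbb{F}_7\mathbb{A}_5$-modules. That module computation is essentially a re-proof of the special case of Lemma~\ref{vicmazurov} relevant here: saying that on every nontrivial $\mathbb{F}_7\mathbb{A}_5$-module an element of order $3$ has a nonzero fixed vector is exactly the mechanism behind applying Lemma~\ref{vicmazurov} to the Frobenius subgroup $\mathbb{A}_4\leqslant\mathbb{A}_5$. So your third paragraph buys nothing beyond what Lemma~\ref{vicmazurov} already delivers in one line, at the cost of the extra case analysis in your second paragraph. (One small omission: for $k=1$ you should also explicitly exclude $U_4(2)$ via $9\in\omega(U_4(2))\setminus\omega(G)$; your earlier exclusion was argued only under $k\geqslant 2$.)
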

\begin{proof}
Let $\overline{G}=G/N$. Then $S={\rm Soc}(\overline{G})=P_1\times
P_2\times \cdots \times P_k$, where $P_i$ are non-Abelian simple
groups and $\overline{G}\leqslant {\rm Aut}(S)$. It is clear that
$P_i\in \mathcal{S}_{7}$. Moreover, by Lemma \ref{elementary},
$|P_i|$ is divisible by 3 and $\pi({\rm Out}(P_i))\subseteq
\{2,3\}$.

First, we claim that the order of $\overline{G}$ is divisible by
$7$. Suppose false. Then, $7$ divides $|N|$ and also
$\pi(S)=\pi(P_i)=\{2,3,5\}$. Thus, $P_i$ can only be isomorphic to
$\mathbb{A}_5$, $\mathbb{A}_6$ or $U_4(2)$. In any case,
$\overline{G}$ contains the Frobenius subgroup $\mathbb{A}_4$ of
order $4\cdot 3$, and by Lemma \ref {vicmazurov}, $G$ has an
element of order $21$, which is a contradiction. Therefore,
$|\overline{G}|$ is divisible by $7$.

Next, we will show that the order of $S$ is also divisible by $7$.
Suppose not. In this case, again, $P_i$ can only be isomorphic to
$\mathbb{A}_5$, $\mathbb{A}_6$ or $U_4(2)$. By previous paragraph,
$7$ divides the order of ${\rm Out}(S)$. But ${\rm Out}(S)={\rm
Out}(S_1)\times \cdots\times {\rm Out}(S_m)$, where the groups
$S_j$ are the direct products of those $P_i$ which are isomorphic
and $S\cong S_1\times \cdots \times S_m$. Therefore, for some $j$,
$7$ divides the order of an outer automorphism group of a direct
product $S_j$ of $t$ isomorphic simple groups $P_i$. By Lemma
\ref{elementary}, ${\rm Out}(P_i)$ is not divisible by $p$ . By
Lemma \ref{automorphism}, $|{\rm Aut}(S_j)|=|{\rm
Aut}(P_i)|^t\cdot t!$. Therefore, $t\geqslant 7$ and $S_j$ admits
an automorphism of order $7$ which centralizes the element
$(a,a,\ldots,a)$ of $S_j$ of order $3$, where $a$ is any element
of $P_i$ of order $3$. Thus $3\cdot 7\in \omega(G)$, a
contradiction.

Finally, it is easy to see that $k=1$, since otherwise $7\cdot
3\in \omega(S)\subseteq \omega(G)$; a contradiction. The lemma is
proved.
\end{proof}

Below, we list the simple groups $S\in \mathcal{S}_7$ satisfy
$7\in \pi(S)$:
$$ \mathbb{A}_n, n=7, 8, 9, 10; \ J_2; \ L_2(7),
 \ L_2(8), \ L_2(49), \ L_3(4), \ U_3(3), \ U_3(5), \ U_4(3), \ S_6(2), \ S_4(7), \ O_8^+(2).$$
\begin{lm}\label{l3janko} $S\cong J_2$ or $\mathbb{A}_8$.
\end{lm}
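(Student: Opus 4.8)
The plan is to eliminate every candidate $S$ in the displayed list except $J_2$ and $\mathbb{A}_8$, using throughout the containment $\omega(S)\subseteq\omega(G/N)\subseteq\omega(G)$. The left inclusion is immediate from $S\leqslant G/N$; the right one holds because $\omega(G)$ is closed under divisibility, so lifting an element $\overline g\in G/N$ of order $m$ to $g\in G$ gives $m\mid|g|\in\omega(G)$, whence $m\in\omega(G)$. Since $\omega(G)=\{1,2,3,4,5,6,7,8,10,12,14,15,24\}$, the orders $9$, $25$, $21=3\cdot7$ and $35=5\cdot7$ all lie outside $\omega(G)$. First I would clear the groups whose own spectrum already escapes $\omega(G)$: each of $\mathbb{A}_9$, $\mathbb{A}_{10}$, $L_2(8)$, $U_4(3)$, $S_6(2)$, $O_8^+(2)$ contains an element of order $9$, while $L_2(49)$ and $S_4(7)$ contain one of order $25$; in each case $\omega(S)\not\subseteq\omega(G)$. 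Note that $J_2$ and $\mathbb{A}_8$ do satisfy $\omega(S)\subseteq\omega(G)$, and so do the five survivors $L_2(7)$, $U_3(3)$, $L_3(4)$, $U_3(5)$, $\mathbb{A}_7$, which must be excluded by a finer argument.

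For these five I would invoke the soluble radical $N$. The same Hall-subgroup/Frattini/Thompson argument used in the first paragraph of Lemma~\ref{l1mac} shows that no two $\mathrm{GK}(G)$-nonadjacent primes can both divide $|N|$; as $3$–$7$ and $5$–$7$ are the only nonadjacent pairs and $7\in\pi(S)$ by Lemma~\ref{l2janko}, this forces $7\nmid|N|$ as soon as $3$ or $5$ divides $|N|$. To put a prime into $\pi(N)$ in the first place I would use that $\mathrm{Aut}(S)$ omits some order lying in $\omega(G)$: for $L_2(7)$ and $U_3(3)$ one has $5\notin\pi(\mathrm{Aut}(S))$ (so $5\in\pi(N)$), while for every one of the five $\mathrm{Aut}(S)$ misses at least one of $8,15,24$, forcing the corresponding prime into $\pi(N)$. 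Each survivor contains a Frobenius subgroup $7:3$, so I would reduce $N$ to a chief factor $V$ (an elementary abelian $p$-group) on which the order-$7$ element $\tau$ acts: if $p\in\{3,5\}$ and $\tau$ has a nonzero fixed point, then $21$ or $35$ lies in $\omega(G)$, a contradiction; otherwise $\tau$ is fixed-point-free, the preimage of the kernel is Frobenius, and Lemma~\ref{vicmazurov} yields an element of order $3\cdot\prod_{q\in\pi(V)}q$. In the decisive case $3\in\pi(N)$ this is $3\cdot3=9\notin\omega(G)$, which finishes those sub-cases cleanly.

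I expect the last step to be the main obstacle. The arithmetic of element orders kills every configuration except the borderline ones in which $\pi(N)$ meets $\{2,5\}$ but not $3$: there the Frobenius $7:3$ gives only the admissible order $3\cdot5=15$, and a $2$-chief-factor gives only admissible $10$ or $8$, so no forbidden product emerges from counting alone. Resolving these (for instance $\pi(N)=\{2,5\}$ with $\mathbb{A}_7$, or the pure $5$-group possibilities for $U_3(5)$) seems to require module-level input: using the $2$- and $5$-modular Brauer characters of $S$ (equivalently the ATLAS representation data) to certify that a suitable $4$- or $7$-element of $S$ acts without nonzero fixed points on the relevant chief factor, so that Lemma~\ref{vicmazurov} produces a forbidden order such as $20$, $35$ or $9$ rather than a legal one like $10$ or $15$. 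Verifying these fixed-point-freeness conditions---and checking that the Frobenius hypothesis of Lemma~\ref{vicmazurov} really survives the reduction to a chief factor---is the delicate heart of the argument that leaves only $S\cong J_2$ or $S\cong\mathbb{A}_8$.
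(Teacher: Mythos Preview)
Your proposal is, by your own admission, unfinished: the single Frobenius subgroup $7{:}3$ yields only $3p\in\omega(G)$, which is harmless for $p\in\{2,5\}$, and you then defer to Brauer tables. The elementary idea you are missing is to vary the Frobenius subgroup so that the \emph{complement} has order $4$ or $5$ rather than $3$. For $\mathbb{A}_7$ the paper observes that $15\notin\omega(\mathbb{S}_7)$ forces $3\in\pi(N)$ or $5\in\pi(N)$; then the Frobenius subgroup $2^2{:}3\subset\mathbb{A}_7$ gives $9\in\omega(G)$ in the first case, while the Frobenius subgroup $3^2{:}4\subset\mathbb{A}_7$ gives $4\cdot 5=20\in\omega(G)$ in the second --- both forbidden, and no modular data is needed. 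For $L_3(4)$ the paper uses the Frobenius subgroups $7{:}3$ and $2^4{:}5$: together they force $N$ to be a $2$-group (otherwise one of $9$, $25$, $35$ appears); but then the element of order $15$ in $G$ survives into $G/N\leqslant\mathrm{Aut}(L_3(4))$, which has none. Thus two of your five survivors fall to Lemma~\ref{vicmazurov} applied with a well-chosen complement, not to representation theory.

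For the remaining three your hesitation is more justified. The paper removes $U_3(5)$ already in the first sweep together with the groups having elements of order $9$, $20$ or $25$, so it never reaches your list of survivors; you kept it because you used only $9$ and $25$ as forbidden orders. For $L_2(7)$ and $U_3(3)$ the paper records only that $N$ must be a $\{2,5\}$-group with both primes present --- exactly the borderline configuration you flag --- and does not write out a final contradiction there. So your structural diagnosis of the hard residue is accurate, but the route through $2$- and $5$-Brauer characters is heavier than necessary for $\mathbb{A}_7$ and $L_3(4)$: the decisive device is simply Lemma~\ref{vicmazurov} with a complement of order $4$ or $5$, producing the forbidden orders $20$, $25$, $35$ that your $7{:}3$ cannot reach.
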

\begin{proof} Since $G$ has no element of order $9$, $20$ or $25$, $S$ can not be
isomorphic to $\mathbb{A}_9$, $\mathbb{A}_{10}$, $L_2(8)$,
$U_4(3)$, $S_6(2)$, $O_8^+(2)$, $L_2(49)$, $U_5(3)$, or $S_4(7)$.
Therefore, $S$ can only be
$$ \mathbb{A}_n, n=7, 8; \ J_2; \ L_2(7),
 \ L_3(4) \ {\rm or} \ U_3(3).$$
We consider above possibilities for the group $S$ consecutively.

First, Suppose that $S$ is isomorphic to $\mathbb{A}_{7}$. Then,
since ${\rm Aut}(\mathbb{A}_7)=\mathbb{S}_7$ does not contain an
element of order $15$ and $G/N\leqslant {\rm
Aut}(\mathbb{A}_{7})$, hence $3\in \pi(N)$ or $5\in \pi(N)$. On
the other hand, since $S$ contains Frobenius subgroups $4:3$ and
$3^2:4$, it follows by Lemma \ref{vicmazurov} that $9\in
\omega(G)$ or $20\in \omega(G)$, which is a contradiction.

Next, we assume that $S$ is isomorphic to $L_2(7)$ or $U_3(3)$.
Since $S$ contains a Frobenius subgroup of order $4\cdot 3$, for
any prime $p$ of the order of $N$, by Lemma \ref{vicmazurov}, $G$
has an element of order $3\cdot p$. This means that $N$ is a
$\{2,5\}$-group. Moreover, since $15, 24\in \omega(G)\backslash
\omega({\rm Aut}(S))$, it follows that $|N|=2^\alpha \cdot
5^\beta$ where $\alpha, \beta \in \mathbb{N}$.

Finally, we assume that $S$ is isomorphic to  $L_3(4)$. Since $S$
contains Frobenius subgroups of orders $7\cdot 3$ and $16\cdot
5$, and $G$ does not contain an element of order $9$, $25$ or
$35$, $N$ is a $2$-group. But then $G/N$ and so ${\rm
Aut}(L_3(4))$ contains an element of order $15$, which is a
contradiction.
\end{proof}
\begin{lm}\label{lemmaeight}
If $S\cong \mathbb{A}_8$, then $N$ is a $2$-subgroup and
$G/N\cong \mathbb{A}_8$.
\end{lm}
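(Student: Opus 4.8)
The plan is to exploit the containment $\mathbb{A}_8\leqslant \overline{G}:=G/N\leqslant {\rm Aut}(\mathbb{A}_8)=\mathbb{S}_8$ together with the element-order data of these two groups. A direct check gives $\omega(\mathbb{A}_8)=\{1,2,3,4,5,6,7,15\}$ and $\omega(\mathbb{S}_8)=\{1,2,3,4,5,6,7,8,10,12,15\}$; both are contained in $\omega(A)$, yet neither contains $14$ or $24$, so $N\neq 1$ in either case. Since $\mu(A)$ involves only the primes $2,3,5,7$, we have $\pi(G)=\{2,3,5,7\}$, whence $N$ is a $\{2,3,5,7\}$-group. Two Frobenius subgroups of $\mathbb{A}_8$ will drive the argument: the group $\mathbb{A}_4=2^2{:}3$, and, using $\mathbb{A}_8\cong L_4(2)$ whose point stabiliser $2^3{:}L_3(2)$ contains an element of order $7$ acting fixed-point-freely on $2^3$, a Frobenius group $2^3{:}7$ of order $56$.

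\emph{$N$ is a $2$-group.} Suppose some odd prime $p\in\{3,5,7\}$ divides $|N|$. As in Lemma 3.4 of \cite{vasilev}, we may assume $N$ is an elementary abelian $p$-group. Because ${\rm Soc}(\overline{G})=\mathbb{A}_8$ is simple, the kernel of the conjugation action of $\mathbb{A}_8$ on $N$ is normal in $\mathbb{A}_8$, hence trivial or all of $\mathbb{A}_8$. If $\mathbb{A}_8$ centralises $N$, then the central extension $N.\mathbb{A}_8$ splits (as $p$ is odd and the Schur multiplier of $\mathbb{A}_8$ has order $2$), so $G$ contains $N\times\mathbb{A}_8$; taking an element of order $7$ (when $p=3$) or of order $5$ (when $p\in\{5,7\}$) in the second factor produces an element of order $21$ or $35$ in $G$, which is impossible. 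If instead $\mathbb{A}_8$ acts faithfully, then every nontrivial subgroup of $\mathbb{A}_8$ acts nontrivially on $N$, so the hypotheses of Lemma~\ref{vicmazurov} hold (with our $N$ in the role of the normal subgroup, the $2$-group kernels being coprime to $N$). Applying it to the preimage in $G$ of $2^2{:}3$ when $p=7$, and of $2^3{:}7$ when $p\in\{3,5\}$, yields an element of $G$ of order $21$, $21$ or $35$ respectively — none of which lies in $\omega(A)$. Hence $N$ is a $2$-group.

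\emph{$\overline{G}\cong\mathbb{A}_8$.} Now $N$ is a nontrivial $2$-group; suppose for contradiction that $\overline{G}=\mathbb{S}_8$. I would pass to a chief factor $M$ of $G$ lying in $N$, an irreducible $\mathbb{F}_2\mathbb{S}_8$-module, and examine the cosets of odd permutations by means of Lemma~\ref{lemma0}. For an $8$-cycle $\sigma$ one has $f(\sigma)=1+\sigma+\cdots+\sigma^{7}=(\sigma-1)^{7}$ over $\mathbb{F}_2$, so a coset $M\sigma'$ contains an element of order $16$ unless $(\sigma-1)^{7}$ annihilates $M$; analogous identities hold for elements of order $10$ and $12$, the dangerous orders now being $16$ and $20$. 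Running through the faithful $2$-modular irreducibles of $\mathbb{S}_8$ (from the Atlas) I would show that on each of them some such coset contains an element of order $16$ or $20\notin\omega(A)$, forcing $\overline{G}=\mathbb{A}_8$ and finishing the lemma.

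\emph{Main obstacle.} The delicate step is the last one. The two candidates $\mathbb{A}_8$ and $\mathbb{S}_8$ cannot be separated by Frobenius or coprime-action arguments, because both spectra already lie inside $\omega(A)$ and both require a nontrivial $2$-group $N$ merely to realise the orders $14$ and $24$; separating them is a genuinely modular computation through Lemma~\ref{lemma0}. Indeed $\mathbb{A}_8$ itself contains no element of order $8$, $10$ or $12$, so — exactly like $14$ and $24$ — these orders must be supplied by $N$, which is precisely why the surviving case $\overline{G}=\mathbb{A}_8$ (an extension of a $2$-group by $\mathbb{A}_8$) cannot be excluded, as recorded in Theorem B.
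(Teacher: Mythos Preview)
Your argument that $N$ is a $2$-group is essentially the paper's (Frobenius subgroups of $\mathbb{A}_8$ together with Lemma~\ref{vicmazurov}); the paper uses $4{:}3$ and $3^2{:}4$, you use $2^2{:}3$ and $2^3{:}7$, and both choices work. There is a small slip in your ``centralised'' subcase: for $p=5$ you should multiply by an element of order $7$ (giving $35$), not by an element of order $5$.

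The real problem is the elimination of $\overline{G}\cong\mathbb{S}_8$. Your plan is to find, on \emph{every} faithful irreducible $\mathbb{F}_2\mathbb{S}_8$-module, a coset of an element of order $8$ or $10$ producing an element of order $16$ or $20$ via Lemma~\ref{lemma0}. This fails on the $8$-dimensional module. That module is ${\rm Ind}_{\mathbb{A}_8}^{\mathbb{S}_8}V_4$, where $V_4$ is the natural $L_4(2)$-module; an $8$-cycle $\sigma$ then acts as $\left(\begin{smallmatrix}0&A\\ I&0\end{smallmatrix}\right)$ with $A$ the matrix of $\sigma^{2}$ on $V_4$. Under $\mathbb{A}_8\cong L_4(2)$ the class $(4,4)$ has centraliser of order $16$, so $A$ has Jordan type $J_3\oplus J_1$, whence $\dim\ker(\sigma-1)=\dim\ker(A-1)=2$ and $(\sigma-1)^{7}=0$: no element of order $16$ arises. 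Likewise for $y$ of order $10$ one has $1+y+\cdots+y^{9}=(y-1)\Phi_5(y^{2})$ over $\mathbb{F}_2$, and since $y^{2}$ (a $5$-cycle) is semisimple on $V_4$ with minimal polynomial $\Phi_5$, this vanishes: no element of order $20$ either.

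The paper closes this gap by a different mechanism. After restricting to $\dim V\in\{6,8,40\}$ via the order-$30$ constraint and disposing of $6$ and $40$ by your Lemma~\ref{lemma0} method, it observes that on the $8$-dimensional module a $3A$-element (hence any element of order $15$) acts fixed-point-freely; since this holds on every chief factor, a $3A$-element acts fixed-point-freely on all of $N$. Higman's theorem on $L_2(4)$ acting on a $2$-group with such a $3$-element then forces $N$ to be elementary abelian, and now $G$ cannot contain an element of order $24$ (any such element would project to a $(4,3,1)$-element $z\in\mathbb{S}_8$ whose cube lies in $3A$, contradicting fixed-point-freeness). So the missing idea is precisely this Higman step: on the one recalcitrant module you must exclude the \emph{allowed} order $24$ rather than produce a forbidden $16$ or $20$.
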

\begin{proof}
By Lemma \ref{l2janko}, $G/N\cong \mathbb{A}_8$ or $G/N\cong {\rm
Aut}(\mathbb{A}_8)=\mathbb{S}_8$. First of all, since $S$ contains
Frobenius subgroups $4:3$ and $3^2:4$, it follows by Lemma
\ref{vicmazurov} that $N$ is a $2$-group. Moreover, the case when
$G/N \simeq \mathbb{S}_8$ is impossible. In contrary case, let
$V$ be a $G$-chief factor of $N=O_2(G)$. By \cite{BrAtl},
$\dim(V)=6, 8$ or $40$, since in other cases $C_G(x)$, where $x$
is an element of order 15, contains an involution and $G$ should
contain an element of order 30 which is not the case. If
$\dim(V)=6$ or $40$, then, using GAP and the corresponding
information from \cite{ar}, it is easy to calculate that some
element $a$ of order 10 in $G/N$ induces   in $V$ by conjugation
a linear transformation $t$ such that $1+t+t^2+\cdots+t^9\ne 0$
and hence, by Lemma \ref{lemma0}, $G$ contains an element of
order 20 which is impossible. Thus, $\dim(V)=8$ and hence an
element $x\in G/N$ from the class $3A$ acts on $N$
fixed-point-freely. Let $L\simeq L_2(4)$ be a subgroup of $G/N$
containing $x$. By Higman's theorem about action of $L_2(2^m)$ on
a $2$-group with an element of order $3$ acting
fixed-point-freely (see \cite[Theorem 8.2]{Higman}), $N$ is
elementary Abelian, hence $G$ cannot contain an element of order
$24$. This contradiction proves the lemma.
\end{proof}
\begin{lm}\label{new2007}
If $S\cong J_2$, then $G\cong {\rm Aut}(J_2)$.
\end{lm}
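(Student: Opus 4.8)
The plan is to first pin down the quotient $\overline G=G/N$, then prove $N=1$, and finally read off $G$ from its spectrum. By Lemma \ref{l2janko} we have $J_2=S\leqslant\overline G\leqslant{\rm Aut}(J_2)$, and since $|{\rm Out}(J_2)|=2$ (Table 2), $\overline G$ is either $J_2$ or ${\rm Aut}(J_2)$. Once $N=1$ is established, $G$ equals one of these two groups; and because $14=2\cdot 7\in\omega({\rm Aut}(J_2))=\omega(G)$ while $14\notin\omega(J_2)$ (the primes $2$ and $7$ are nonadjacent in ${\rm GK}(J_2)$), the quotient $J_2$ is excluded and $G\cong{\rm Aut}(J_2)$, as required. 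Thus the entire content of the lemma is the proof that $N=1$.

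Suppose $N\neq 1$. Exactly as in Lemma \ref{l4mac} (following Lemma 3.4 of \cite{vasilev}) I may assume that $N$ is a nontrivial elementary Abelian $p$-group with $p\in\pi(J_2)=\{2,3,5,7\}$, regarded as an $\mathbb{F}_p\overline G$-module, and in each case I will produce an element order outside $\omega({\rm Aut}(J_2))=\{1,2,3,4,5,6,7,8,10,12,14,15,24\}$. The forbidden orders I exploit are $9,16,20,21,30,35$. The cases $p=3$ and $p=7$ are immediate from Lemma \ref{vicmazurov}: the group $J_2$ contains the Frobenius subgroup $7\!:\!3$ (the normalizer of a Sylow $7$-subgroup) and the Frobenius subgroup $2^2\!:\!3\cong\mathbb{A}_4$. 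If $p=3$, apply Lemma \ref{vicmazurov} to $7\!:\!3\leqslant\overline G$, with kernel of order $7$ (coprime to $N$) and complement of order $3$, to get $3\cdot 3=9\in\omega(G)$; if $p=7$, apply it to $\mathbb{A}_4\leqslant\overline G$, with kernel $2^2$ and complement of order $3$, to get $7\cdot 3=21\in\omega(G)$. Both contradict the spectrum, so it remains to handle $p\in\{2,5\}$.

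These two cases are the real obstacle, and I expect them to require the modular representation theory of $J_2$ and ${\rm Aut}(J_2)$ together with Lemma \ref{lemma0}, in the style of the preceding theorem (the cases $He$, $Suz$, $O'N$) and of the $\mathbb{A}_8$ analysis in Lemma \ref{lemmaeight}. The strategy is to run through the irreducible $\mathbb{F}_p\overline G$-modules, i.e.\ the $p$-Brauer characters of \cite{BrAtl} (with the explicit matrices of \cite{ar} and GAP), and to force a forbidden order on every faithful composition factor $V$. For $p=5$ I would use that a $5'$-element cannot act fixed-point-freely on all such $V$: whenever an element of order $4$, $6$, $7$ or $8$ of $\overline G$ has a nonzero fixed space on $V$, its product with a suitable $5$-element of $N$ has order $20$, $30$, $35$ or $40$, all inadmissible; the Brauer character values then show that on each faithful factor at least one such fixed point occurs (for instance, an element of order $7$ fixes a nonzero vector on a $14$-dimensional module, since $\dim V=14\not\equiv 0\pmod 6$ and $\mathrm{ord}_7(5)=6$). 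For $p=2$ I would combine two tests on a chief factor $V$: if an element of order $15$ has a nonzero fixed space on $V$ then $G$ contains an element of order $30$; otherwise, applying Lemma \ref{lemma0} to the coset $Nx$ of an element $x$ of order $8$, the condition $1+x+\cdots+x^{7}\neq 0$ on $V$ produces a preimage of $x$ of order $16$. The computation to be carried out is the verification that, over all $2$-modular and $5$-modular composition factors of a faithful $\overline G$-module, one of these forbidden orders is always forced; this module-by-module bookkeeping is where the difficulty lies. It is precisely the step at which the parallel argument for $\mathbb{A}_8$ in Lemma \ref{lemmaeight} stalls, whereas for $J_2$ the larger supply of forbidden orders ($16,20,30,35$) should close every case.

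Having excluded all four primes, $N=1$ follows, and the conclusion $G\cong{\rm Aut}(J_2)$ is obtained as in the first paragraph, since $14\in\omega(G)\setminus\omega(J_2)$. This proves the lemma.
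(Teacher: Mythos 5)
Your reduction and your handling of the odd primes are workable in outline (for $p=3,7$ you still owe the verification of the hypothesis of Lemma \ref{vicmazurov} that the Frobenius kernel does not lie in $NC_G(N)/N$ --- fixable using simplicity of $J_2$ and the fact that its Schur multiplier is a $2$-group; the paper instead reads the odd-characteristic Brauer tables to produce $21$ or $35$ directly). The genuine gap is the case $p=2$, and it is not just unfinished bookkeeping: the dichotomy you propose there is false. Consider the $12$-dimensional irreducible $\mathbb{F}_2$-module of $J_2$ (the $\mathbb{F}_2$-form of the two $6$-dimensional $\mathbb{F}_4$-modules coming from the embedding $J_2<G_2(4)$). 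An element of order $15$ lies in a maximal torus of $G_2(4)$ of order $q^2-1=15$ and has no eigenvalue $1$ on the natural $6$-dimensional module, so your first test produces no element of order $30$; and every element of order $8$ acts on this module with Jordan blocks of size at most $6$, so in characteristic $2$ one has $1+x+\cdots+x^{7}=(1+x)^{7}=0$, and by Lemma \ref{lemma0} \emph{every} element of the coset $Nx$ has order $8$: your second test produces no element of order $16$ either. Both horns of your dichotomy fail on this module; what actually kills it is an element of order $20$ coming from class $10A$ via Lemma \ref{lemma0} --- the test you reserved for $p=5$ only.

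The deeper, structural problem is that your Vasilev-style reduction passes to a quotient of $G$, and quotients preserve only the inclusion $\omega\subseteq\omega({\rm Aut}(J_2))$; but the $2$-case cannot be closed by forbidden orders (upper bounds) alone, and the paper does not close it that way. The paper proves its preliminary claims (that the soluble radical is a $2$-group) for arbitrary $H$ with $\omega(H)\subseteq\omega({\rm Aut}(J_2))$, and then returns to the \emph{original} group $G$, where spectral \emph{equality} holds: if $G/N\cong J_2$ with $N\neq 1$ a $2$-group, the absence of $30$ forces (by the $2$-Brauer table and coprime action) every element of order $7$ to act fixed-point-freely on $N$, whence $14\notin\omega(G)$ --- the contradiction is a \emph{missing required} order, not a forbidden one, and your framework can never produce such a contradiction; if $G/N\cong{\rm Aut}(J_2)$, the \emph{presence} of $30$ in $\omega(G)$ is what identifies the relevant chief factor before the order-$20$ computation is run. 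Your plan discards exactly this lower-bound information at the outset, so the $p=2$ case --- which you yourself flag with ``should close every case'' --- is precisely where the proposal both asserts something false and lacks the information the proof needs.
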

\begin{proof}
First of all, we prove that $N$ is a non-trivial 2-group. To prove
this, we observe that:\\

(1) {\em If $H/N\cong J_2$ with $\omega(H)\subseteq \omega({\rm
Aut}(J_2))$ and $N$ is a non-trivial $p$-group, then $p=2$ and
$C_H(N)\subseteq N$.}\\

\begin{proof}
Suppose that $p\neq 2$. Then $p=3, 5$ or 7 and one can assume that
$N$ is elementary Abelian $p$-group. Suppose first that $p=7$.
Then the table of $7$-Brauer characters of $J_2$ (see \cite{gap})
shows that an element from the class $3a$ in $J_2$ centralizes an
element of order $3$ in $N$ and hence $H$ contains an element of
order $21$ which is not the case. If $p=3$ or $p=5$, then the
corresponding table of $p$-Brauer characters shows that $H$
contains an element of order $7\cdot p$ which is not the case.
Thus $p=2$. If $C_H(N)\nsubseteq N$, then $C_H(N)N=H$ and hence
$C_H(N)$ contains an element of order $30$ which is not the case.
The lemma is proved. \end{proof}\\

(2) {\em If $H/N\cong J_2$ and $\omega(H)\subseteq \omega({\rm
Aut}(J_2))$, then $N$ is a $2$-group.}\\

\begin{proof} Suppose false. If $H$ is a minimal counter-example,
then, by part (1) and Frattini argument, $N$ is an extension of a
non-trivial elementary Abelian $p$-group $P$ of odd order by a
non-trivial $2$-group and $C_H(P)=P$.

It is obvious that $H/N$ acts on the center $Z$ of $2$-group
$N/P$. If $H/N$ acts trivially on $Z$ then $H/P$ contains an
element of order $30$ which is not the case. Thus $H/N$ acts
faithfully, and hence $ZS$, where $S$ is a subgroup of order $7$
in $H/N$, contains a Frobenius subgroup $[Z,S]\cdot S$. Then the
full pre-image of this group contains an element of order $7\cdot
p$, which is not the case. \end{proof}

Therefore, $N$ is a 2-group and by Lemma \ref{l2janko}, $G/N\cong
J_2$ or $G/N\cong {\rm Aut}(J_2)$. If $G/N\cong J_2$, then $N\neq
1$, and since $G$ cannot contain an element of order $30$, the
table of $2$-Brauer characters shows that an element of order $7$
from $G$ acts on $N$ fixed-point-freely and hence $G$ cannot
contain an element of order $14\in \omega({\rm Aut}(J_2))$. Thus
$G/N\cong {\rm Aut}(J_2)$.

If $N\neq 1$, then one can assume that $N$ is elementary Abelian
and $N$ is an absolutely irreducible ${\rm Aut}(J_2)$-module over
a field of characteristic $2$. Since $G$ contains an element of
order $30$, the table of $2$-Brauer characters of ${\rm Aut}(J_2)$
shows that $N$ is of dimension $12$. Using Lemma \ref{lemma0} and
the information on ${\rm Aut}(J_2)$ from \cite{ar}, it is easy to
check that some pre-image in $G$ of an element contained in the
class $10A$ of ${\rm Aut}(J_2)$ is of order $20\notin \omega({\rm
Aut}(J_2))$.
\end{proof}

Authors: \vspace{2mm}

{\bf V. D. Mazurov},

{\em Sobolev Institute of Mathematics},

{\em Novosibirsk, $630090$, Russia},

and

{\em Novosibirsk State University},

\verb"mazurov@math.nsc.ru"

\vspace{3mm}

{\bf A. R. Moghaddamfar},

{\em Faculty of Mathematics,

K. N. Toosi University of Technology,

P. O. Box $16315$--$1618$, Tehran, Iran},

\verb"moghadam@kntu.ac.ir"   \ \  and \ \ \verb"moghadam@ipm.ir"


\begin{thebibliography}{99}
\bibitem{atlas} J. H. Conway, R. T. Curtis, S. P. Norton, R. A. Parker,
and R. A. Wilson, {\em Atlas of finite groups}, Clarendon Press,
Oxford, 1985.

\bibitem{Higman} G. Higman, {\em Odd characterizations of finite simple groups},
(Lecture notes, University of Michigan, 1968).

\bibitem{BrAtl} C. Jansen, K. Lux, R. A. Parker and R. A. Wilson,
{\em An atlas of Brauer characters}, Clarendon Press, Oxford,
1995.

\bibitem{k} A. S. Kondrat\'ev, {\em On prime graph components of
finite simple groups}, Math. USSR Sbornik, 180(6)(1989), 787-797.

\bibitem{lishi} H. Li and W. J. Shi, {\em A characterization of
some sporadic simple groups}, Chinese Ann. Math. 14A(2)(1993),
144-151. (in Chinese)

\bibitem{lm} M. S. Lucido and A. R. Moghaddamfar, {\em  Groups with complete
prime graph connected components}, J. Group Theory, 7(3)(2004),
373-384.

\bibitem{mazu} V. D. Mazurov,
{\em The set of orders of elements in a finite group}, Algebra and
Logic, 33(1)(1994), 49-56.

\bibitem{maz} V. D. Mazurov,
{\em Characterizations of finite groups by sets of element
orders}, Algebra and Logic, 36(1)(1997), 23-32.

\bibitem{mazshi} V. D. Mazurov and W. J. Shi, {\em A note to the
characterization of sporadic simple groups}, Alg. colloq.
5(3)(1998), 285-288.

\bibitem{newmaz} V. D. Mazurov, {\em Recognition of finite simple groups $S_4(q)$ by their
element orders}, Algebra and Logic, 41(2)(2002), 93-110.

\bibitem{maz04} V. D. Mazurov, {\em Characterization of groups by
arithmetic properties}, Algebra Colloquium, 11(1)(2004), 129-140.

\bibitem {mzd} A. R. Moghaddamfar, A. R. Zokayi
and M. R. Darafsheh, {\em On characterizability of the
automorphism groups of sporadic simple groups by their element
orders}, Acta Mathematica Sinica, 20(4)(2004), 653-662.

\bibitem{praegershi} C. E. Praeger and W. J. Shi, {\em A characterization of some
alternating and symmetric groups},
Comm. Algebra, 22(5)(1994), 1507-1530.

\bibitem{gap} M. Sch$\rm \ddot{o}$nert et al., {\em GAP- Groups,
Algorithms and Programming,} (2004),
http://www-gap.mcs.st-and.ac.uk/

\bibitem{wujie1} W. J. Shi, {\em A charateristic property
of $J_1$ and ${\rm PSL}_2(2^n)$}, Adv. in Math. 16(1987),
397-401. (in Chinese)

\bibitem{wujie2} W. J. Shi, {\em A charateristic property of Mathieu groups},
 Chinese Ann. Math. 9A(5)(1988) 575-580. (in Chinese)

\bibitem{wujie3} W. J. Shi, {\em A charateristic property of Conway simple group $Co_2$},
 Chinese J. Math. 9(2)(1989), 171-172, (in Chinese)

\bibitem{wujie4} W. J. Shi, {\em A characterization of the Higman-Sims simple group},
 Houston J. Math. 16(1990), 597-602.

\bibitem{wujie5} W. J. Shi, {\em The characterization of the sporadic simple groups
 by their element orders}, Algebra Colloq. 1(2)(1994), 159-166.

\bibitem{wujieli} W. J. Shi and H. Li,
 {\em A charateristic property of $M_{12}$ and ${\rm PSU}(6,2)$},
 Acta Math. Sinica 32(6)(1989), 758-764. (in Chinese)

\bibitem{vasilev} A. V. Vasilev, {\em On recognition of all finite
nonabelian simple groups with orders having prime divisors at
most $13$}, Sib. Math. J., 46(2)(2005), 246-253.

\bibitem {w} J. S. Williams, {\em  Prime graph components of finite
groups}, J. of Algebra,  69(2)(1981), 487-513.

\bibitem{ar} R. A. Wilson, {\em ATLAS of finite group
representations}, http://web.mat.bham.ac.uk/atlas/

\bibitem {zavarnitsine} A. V. Zavarnitsine, {\em  Recognition of alternating group of
degrees $r+1$ and $r+2$ for primes $r$ and the group of degree
$16$ by their element order sets},  Algebra and Logic,
39(6)(2000), 370-377.
\end{thebibliography}
\end{document}